\documentclass{article}

\usepackage[english]{babel}
\usepackage[utf8]{inputenc}
\usepackage{amsmath,amsthm,amsfonts,amssymb}
\usepackage{graphicx}
\usepackage{multicol}
\usepackage{scalerel}
\usepackage{enumerate}
\usepackage[dvipsnames]{xcolor}
\usepackage[all]{xy}
\usepackage{tikz-cd}
\usepackage{bm}
\input xypic
\usepackage[colorinlistoftodos]{todonotes}
\definecolor{laura}{rgb}{.6,.2,.85}
\definecolor{carolyn}{rgb}{.5,.8,.5}
\definecolor{constanze}{rgb}{0.5,1.0,0.83}

\newtheorem{theorem}{Theorem}
\newtheorem{lemma}[theorem]{Lemma}
\newtheorem{corollary}[theorem]{Corollary}
\newtheorem{proposition}[theorem]{Proposition}

\theoremstyle{definition}

\theoremstyle{remark}
\newtheorem{remark}[theorem]{Remark}

\newcommand{\Z}{\mathbb{Z}}
\newcommand{\Q}{\mathbb{Q}}
\newcommand{\N}{\mathbb{N}}
\newcommand{\ds}{\displaystyle}
\newcommand{\C}{\mathcal{C}}
\newcommand{\B}{\mathcal{B}}
\newcommand{\D}{\mathcal{D}}
\newcommand{\Sp}{\mathcal{S}}

\newcommand{\E}{E(1)}

\DeclareMathOperator{\HOM}{Hom}
\DeclareMathOperator{\Ho}{Ho}
\DeclareMathOperator{\Ext}{Ext}
\DeclareMathOperator{\Hom}{Hom}
\DeclareMathOperator{\End}{End}
\DeclareMathOperator{\Ima}{Im}

\title{Endomorphisms of Exotic Models}

\author{Eugenia Ellis, Constanze Roitzheim, Laura Scull and Carolyn Yarnall}

\date{\today}

\begin{document}
\maketitle

\begin{abstract}
We calculate the endomorphism dga of Franke's exotic algebraic model for the $K$-local stable homotopy category at odd primes. We unravel its original abstract structure to give explicit generators, differentials and products.
\end{abstract}

\section*{Introduction}

The stable homotopy category $\Ho(\Sp)$ is a large and complex category. 
Thus it becomes natural to break it up.  First we break it into its $p$-local parts $\Ho(\Sp_{(p)})$, and then these are broken into smaller, atomic pieces. These pieces are described by the \emph{chromatic localisations} $\Ho(L_{n}\Sp)$, $n \in \mathbb{N}$. (Note that the prime $p$ is traditionally absent from notation.) We can think of the stable homotopy category as a city with a tower block with infinitely many floors for each prime, the first $n$ floors being described by $\Ho(L_{n}\Sp)$ and the $n^{th}$ floor of each tower block being described by $\Ho(L_{K(n)}\Sp)$ where $K(n)$ is the $n^{th}$ Morava $K$-theory.   

\bigskip

\setlength{\unitlength}{1cm}
\begin{picture}(15, 4)(-2,0)
\put(0,0){ \line(1,0){10} }

\put(1,0){ \line(0,1){3} }
\put(2,0){ \line(0,1){3} }
\put(2.5,0){ \line(0,1){3} }
\put(3.5,0){ \line(0,1){3} }
\put(4,0){ \line(0,1){3} }
\put(5,0){ \line(0,1){3} }

\put(1,3){ \line(1,0){1} }
\put(2.5,3){ \line(1,0){1} }
\put(4,3){ \line(1,0){1} }

\put(1,0.5){ \line(1,0){1} }
\put(2.5,0.5){ \line(1,0){1} }
\put(4,0.5){ \line(1,0){1} }

\put(1,1){ \line(1,0){1} }
\put(2.5,1){ \line(1,0){1} }
\put(4,1){ \line(1,0){1} }

\put(1,1.5){ \line(1,0){1} }
\put(2.5,1.5){ \line(1,0){1} }
\put(4,1.5){ \line(1,0){1} }

\put(1,2.5){ \line(1,0){1} }
\put(2.5,2.5){ \line(1,0){1} }
\put(4,2.5){ \line(1,0){1} }

\put(6, 2){.....}

\put(1.6, 2){.}
\put(1.6, 2.1){.}
\put(1.6, 1.9){.}

\put(3.1, 2){.}
\put(3.1, 2.1){.}
\put(3.1, 1.9){.}

\put(4.6, 2){.}
\put(4.6, 2.1){.}
\put(4.6, 1.9){.}

\put(1.3, 0.2){\tiny$n=0$}
\put(1.3, 0.7){\tiny$n=1$}
\put(1.3, 1.2){\tiny$n=2$}
\put(1.2, 2.7){\tiny$n=\infty$}

\put(5.5,0.8){\vector(1,0){2}}
\put(6.4, 0.4){\bf p}

\put(0.5,0.3){\vector(0,1){2}}
\put(0,1){\bf n}

\put(0, 3.5){\footnotesize\it Visualising $\Ho(\Sp)$ in relation to $\Ho(L_{K(n)}\Sp)$:}
\end{picture}

The ``ground floor'', $\Ho(L_{K(0)}\Sp)$, is given by rational homotopy theory; this is the same for all primes. The first and ground floor, $\Ho(L_{1}\Sp)$, are governed by $p$-local topological $K$-theory, which is related to vector bundles. The next level, $\Ho(L_{2}\Sp)$, is related to elliptic curves,  but is already much more complicated to describe, while the higher levels are valuable for their structural contribution to the bigger picture rather than any individual computational merits. 

Schwede showed in \cite{Schwede07} that the triangulated structure of $\Ho(\Sp)$ determines the entire higher homotopy information of spectra, that is, it determines the underlying model category up to suitable equivalence. In other words, the stable homotopy category is \emph{rigid}. This is particularly interesting because examples of rigidity are usually hard to find. A natural question to follow is whether the atomic building blocks $\Ho(L_{n}\Sp)$ are also rigid.   Franke showed in \cite{Franke96} that for $n=1$ and $p\geq 5$
this is false and $\Ho(L_{n}\Sp)$ are not rigid   by constructing an algebraic counterexample.  Note  that the Franke's result in \cite{Franke96} is formulated for $n^2+n<2p-2$. This version contains a gap which is pointed out in \cite{Patchkoria}, and partially filled in \cite{Patchkoria2}.  

The second author showed in \cite{Roitzheim07} that in contrast, in the case of $n=1$ and $p=2$, the $K$-local stable homotopy category $\Ho(L_1\Sp)$ is rigid. To this day it is rather mysterious why counterexamples exist for $p\geq 5$ but not for $p=2$, and what the situation is like outside of the range covered by Franke and Roitzheim.  For $p=3$ there is an equivalence but it seems from \cite{Patchkoria2} that is unknown whether the equivalence is triangulated. 

Franke's model is \emph{algebraic}, which means that it is model enriched over the model category of chain complexes. Therefore it makes sense to direct the study of exotic models to algebraic models. For example, is Franke's model the only algebraic model for $\Ho(L_1\Sp)$? Or are all exotic models for $\Ho(L_1\Sp)$ algebraic? 

By Morita theory, algebraic model categories which have a single compact generator are determined by an endomorphism dga with homology and Massey products. To get a grip on those uniqueness questions we have to understand the endomorphism dgas:   if there was a unique endomorphism dga, then there would also  be a unique algebraic model. This has partially been answered in \cite{Roitzheim15} but it does not seem feasible to approach this by hand due to the rapidly increasing complexity of the computations.

Thus,  in order to work towards a greater understanding of algebraic models, their uniqueness,  and ultimately the stable homotopy category,  we are going to look at the endomorphism dga of Franke's exotic models. This construction used  many abstract ingredients such as injective resolutions of $E(1)_*E(1)$-comodules, Adams operations, quasi-periodicity and $v_1$-self maps.   The goal of this paper is to carefully unravel these abstractions  in order to arrive at the $\Z_p$-module structure of the dga in question. We hope that going through and turning the  abstract machinery into concrete numbers will contribute to the greater picture by allowing for direct calculations in the future. 

\bigskip
This paper is organised as follows. In Section \ref{sec:algmodels} we  recall some background on endomorphism dgas and the context that we are using them in. In Section \ref{sec:franke} we give a summary of the construction and properties of Franke's exotic model for $\Ho(L_1\Sp)$. In Section \ref{sec:enddga} we perform first steps to simplify the endomorphism dga of a compact generator of Franke's model, showing that some pieces are trivial.  In Section \ref{sec:sequences} we show how the endomorphism dga can be expressed explicitly in terms of sequences with coefficients in $\Z_{p}$, using work of  \cite{Clarke-Crossley-Whitehouse}.  In Sections \ref{sec:n0} and \ref{sec:nnot0} we use the sequence representation to do an explicit calculation of the homology of the endomorphism dga, verifying that it gives the expected result.   We conclude in Section \ref{sec:massey} by verifying that the product and Massey products also give the expected result.

The authors thank the organizers of the WIT II conference, the Banff International Research Station for hosting us, and the AWM for providing travel support.  
The first author is grateful to the Universidad de la Rep\'ublica - CSIC for its support and for travel funds. 
The second author would like to thank the University of Kent Faculty of Sciences Research Fund as well as SMSAS for travel funds, and would furthermore like to thank Andrew Baker, David Barnes and Sarah Whitehouse for interesting discussions.


\section{Algebraic Models}\label{sec:algmodels}

The basic goal is to study the $K$-local stable homotopy category at an odd prime $p$.  We assume that the reader is familiar with basic notions regarding stable model categories and Bousfield localisation, see e.g. \cite{Barnes-Roitzheim:Local Framings}. For background on $K$-theory and related topics, see \cite{Bousfield}. Recall that $K$-theory splits into
\[
K= \bigvee\limits_{i=0}^{p-2} \Sigma^{2i} E(1)
\]
where $E(1)$ is the Adams summand with $E(1)_*=\mathbb{Z}_{(p)}[v_1, v_1^{-1}]$, $|v_1|=2p-2$. Thus, $L_{K_{(p)}}=L_{E(1)}$, which is commonly denoted by $L_1$. 

To study the $K_{(p)}$-local stable homotopy category  $\Ho(L_1\Sp)$, we will study the existence of 
 \emph{algebraic model categories}:    a stable  $Ch(\mathbb{Z})$-model category $\C$ in the sense of \cite[Appendix A]{Dugger},  such that there is an equivalence of triangulated categories
\[
\Phi: \Ho(L_1\Sp) \longrightarrow \Ho(\C).
\]

If $\C$ is an arbitrary stable model category, it can be very hard to understand it, or to compare  $L_1\Sp$ with $\C$. The following result \cite[Theorem 3.1.1]{Schwede-Shipley} gives a more concrete way to approach $\C$.   Recall that an object $X \in \Ho(\C)$ is \emph{compact} if the functor $\Ho(\C)(X,-)$ commutes with arbitrary coproducts. $X$  is a \emph{generator} if the full subcategory of $\Ho(\C)$ containing $X$ which is closed under coproducts and exact triangles is again $\Ho(\C)$ itself. Then we have the following result.  

\begin{theorem}\label{thm:schshi}[Schwede-Shipley]
Let $\C$ be a simplicial proper, stable model category with a compact generator $X$. Then there exists a chain of simplicial Quillen equivalences between $\C$ and module spectra over the endomorphism ring spectrum of $X$,
\[
\C \simeq \mbox{mod-}\End(X).
\]
\end{theorem}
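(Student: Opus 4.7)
The plan is to follow the Morita-theoretic strategy pioneered by Schwede and Shipley. I would construct an endomorphism ring spectrum $\End(X)$ together with a Quillen adjunction
\[
- \wedge_{\End(X)} X \colon \mbox{mod-}\End(X) \rightleftarrows \C \colon \Hom_\C(X,-),
\]
and then verify that its derived functors are inverse equivalences of triangulated categories.

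First, I would upgrade the simplicial enrichment on $\C$ to a spectral (symmetric spectra) enrichment. Since $\C$ is stable, simplicial, and proper, a standard framing construction produces, for each pair of objects, a symmetric mapping spectrum $\Hom_\C(A,B)$. Applied to $X$ itself, this yields a symmetric ring spectrum $\End(X) = \Hom_\C(X,X)$, and in general $\Hom_\C(X,Y)$ is naturally a right $\End(X)$-module spectrum. The left adjoint $- \wedge_{\End(X)} X$ can then be defined by a coend using the tensoring on $\C$ coming from the framing. Next I would check that this pair is a Quillen adjunction for the stable model structure on $\mbox{mod-}\End(X)$. This reduces to checking that $\Hom_\C(X,-)$ preserves fibrations and trivial fibrations between fibrant objects, which follows from properness and the assumption that $X$ may be replaced by a cofibrant model.

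The heart of the proof is the Quillen equivalence, which I would approach by showing that the derived unit and counit are weak equivalences on all objects. At $X$ itself (and at $\End(X)$ on the module side) this is tautological from the construction. To propagate the conclusion to the rest of $\C$, I would observe that both derived functors are exact (they preserve cofibre sequences), and, crucially, that both commute with arbitrary coproducts in the respective homotopy categories; for $R\Hom_\C(X,-)$ this is precisely the content of compactness of $X$. Consequently, the full subcategory of $\Ho(\C)$ on which the derived unit is an equivalence is a localising subcategory containing $X$, and the generator hypothesis forces it to coincide with $\Ho(\C)$. The symmetric statement for the counit uses that $\End(X)$ generates $\Ho(\mbox{mod-}\End(X))$, which holds by construction.

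The main obstacle I expect is the first step: rigidifying $\End(X)$ into a strictly associative ring spectrum. Homotopically the endomorphisms of $X$ form an $A_\infty$-algebra for free, but to produce an actual model-categorical object $\mbox{mod-}\End(X)$ one needs a strict monoid, and this is where the simplicial plus properness hypotheses and the passage through framings and symmetric spectra do real work. Once a strict model is in hand, the equivalence portion is the standard localising-subcategory argument driven by compactness and the generator property, together with the observation that cellular constructions on the module side are matched by the corresponding homotopy colimits on the $\C$ side under the derived functors.
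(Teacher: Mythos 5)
This theorem is not proved in the paper at all --- it is quoted from Schwede--Shipley \cite[Theorem 3.1.1]{Schwede-Shipley} --- and your outline is essentially the argument of that original reference: produce a spectral enrichment from the simplicial structure, form the strict ring spectrum $\End(X)$, set up the adjunction with $\mbox{mod-}\End(X)$, and conclude by the localising-subcategory argument in which compactness of $X$ gives that $R\Hom_\C(X,-)$ preserves coproducts and the generator hypothesis forces the subcategory where the derived unit/counit is an equivalence to be all of $\Ho(\C)$. Two minor caveats: since $\C$ is simplicial one builds symmetric spectrum objects in $\C$ directly from the simplicial enrichment rather than via framings (framings are needed only in the non-simplicial variants), and the construction of the stable model structures on these spectrum objects and on the module category quietly uses cofibrant generation, a hypothesis present in Schwede--Shipley's statement though suppressed in the paper's formulation.
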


Note that  the assumption that $\C$ is simplicial is not a significant restriction, see e.g. \cite{Dug01}. 

The category $\Ho(L_1\Sp)$ possesses the sphere $L_1S^0$ as a compact generator.  Thus if
$
\Phi: \Ho(L_1\Sp) \longrightarrow \Ho(\C)
$
is a triangulated equivalence as above, we can use (a fibrant and cofibrant replacement of) $X=\Phi(L_1S^0)$ as a compact generator for $\Ho(\C)$. 

From Theorem \ref{thm:schshi}, we know that the endomorphism ring spectrum $\End(X)$ satisfies
\[
\pi_*(\End(X))\cong\Ho(\C)(X,X)_{*}.
\]
Combining this with our triangulated equivalence,  we have
\[
\pi_*(\End(X))\cong\Ho(\C)(X,X)_{*}\cong \pi_*(L_1S^0).
\]

Now if we additionally assume that $\C$ is an algebraic category,   \cite[Proposition 6.3]{Dugger-Shipley} gives us the following about the endormorphism spectrum:  
\begin{theorem}
Let $\C$ be an algebraic model category with a fibrant and cofibrant compact generator $X$. Then the endomorphism ring spectrum $\End(X)$ is weakly equivalent to the generalised Eilenberg-Mac Lane spectrum of the endomorphism dga $\C(X,X).$
\end{theorem}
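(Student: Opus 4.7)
The strategy is to pass through Shipley's Quillen equivalence between the model category of $H\Z$-algebras and the model category of differential graded $\Z$-algebras, under which a dga $A$ corresponds to its generalised Eilenberg-Mac Lane spectrum $HA$ with $\pi_* HA \cong H_*(A)$. The goal is to exhibit $\End(X)$ as an $H\Z$-algebra whose image under the inverse of Shipley's equivalence is quasi-isomorphic to $\C(X,X)$.

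First I would make precise the dga structure on $\C(X,X)$ coming from the $Ch(\Z)$-enrichment: since $\C$ is tensored, cotensored and enriched over $Ch(\Z)$ compatibly with its model structure, the hom-complex between a cofibrant-fibrant pair is a chain complex whose homology recovers the graded hom in $\Ho(\C)$, and composition makes $\C(X,X)$ into a differential graded ring whose homology is $\Ho(\C)(X,X)_*$.

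Second, I would promote $\End(X)$ from a plain ring spectrum to an $H\Z$-algebra. Shipley's strong symmetric monoidal Dold-Kan equivalence $Ch(\Z)\to H\Z\mbox{-mod}$ allows one to transfer the $Ch(\Z)$-enrichment on $\C$ to an enrichment in $H\Z$-modules, producing a model category enriched in $H\Z$-modules with the same underlying structure. The endomorphism object computed in this refined enrichment is by construction an $H\Z$-algebra, and forgetting the $H\Z$-module structure recovers the ring spectrum $\End(X)$ from Theorem \ref{thm:schshi}. Finally, applying the inverse of Shipley's equivalence to this $H\Z$-algebra lift produces a dga that, by functoriality of the enrichment, is quasi-isomorphic to the composition dga $\C(X,X)$, which in turn gives the claimed weak equivalence $\End(X)\simeq H\C(X,X)$.

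The main obstacle is the bookkeeping required in the second step: one must check that transferring the enrichment, forming the endomorphism object, and inverting Shipley's functor all commute up to quasi-isomorphism, so that the two a priori distinct dgas -- the composition dga $\C(X,X)$ on one hand and the Shipley-preimage of $\End(X)$ on the other -- are canonically identified. This compatibility between enrichment transfer and the formation of endomorphism objects is the technical heart of \cite[Proposition 6.3]{Dugger-Shipley}, and once established the conclusion follows formally since Shipley's functor carries quasi-isomorphisms of dgas to weak equivalences of $H\Z$-algebras, hence of their underlying spectra.
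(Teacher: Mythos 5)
Your proposal is correct and follows essentially the same route as the paper, which proves nothing beyond citing \cite[Proposition 6.3]{Dugger-Shipley}: your sketch of transferring the $Ch(\mathbb{Z})$-enrichment through Shipley's monoidal equivalence with $H\mathbb{Z}$-modules and comparing endomorphism objects is precisely the content of that cited result, and you correctly defer the technical compatibility to it.
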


Moreover, for $X \cong \Phi(L_1S^0)$,
the endomorphism dga $\C(X,X)$ satisfies (\cite[Lemma 2.1]{Roitzheim15}):

\begin{itemize}
\item $H^*(\C(X,X)) = \Ho(\C)(X,X)_{*}=\pi_*(L_1S^0)$.
\item Under the above, the Massey products of $\C(X,X)$ coincide with the Toda brackets of $\pi_*(L_1S^0)$.
\end{itemize}

Thus we see that in order to understand algebraic models $\C$ for $L_1\Sp$ it is vital to understand the endomorphism dga of a compact generator.   In the next section, we will describe a specific algebraic model $\C$ that will be the focus of this paper, and also take a closer look at its compact generator.

\section{Franke's model and its compact generator}\label{sec:franke}

In this section we are going to give a brief description of the particular algebraic model  for $\Ho(L_1\Sp)$ that we will be looking at in detail in the subsequent sections.  This was developed by Franke  \cite{Franke96};  further details are available in  \cite{Roitzheim08} (and  \cite{Patchkoria2} for the triangulated structure).  In what follows, we will use notation consistent with \cite{Roitzheim08}.

To begin, we consider the 
 category $\B$,  an abelian category which is equivalent to $\E_*\E$-comodules that are concentrated in degrees $0\,\, \mbox{mod}\,\, 2p-2$. (Note that in  \cite{Bousfield}, Bousfield denotes this category by $\B(p)_*$.) We can think of $\E_*\E$-comodules as modules over $\E_*$ with an action of the Adams operations. Furthermore, the category $\B$ is equipped with self-equivalences
\[
T^{j(p-1)}:\B \longrightarrow \B \,\,\, (j \in \mathbb{Z})
\]
each of which is the identity on the underlying $\E_*$-modules but changes the Adams operation $\Psi^k$ by a factor of $k^{j(p-1)}$.

\bigskip
Now we consider \emph{twisted chain complexes}  $\C^{2p-2}(\B)$ on $\B$.   An object of $\C^{2p-2}(\B)$ is a cochain complex $C^*$ with $C^i \in \B$ together with an isomorphism
\[
\alpha_C: T^{(p-1)}(C^*) \longrightarrow C^*[2p-2]=C^{*+2p-2}.
\]
Morphisms in this category are cochain maps $f: C^* \longrightarrow D^*$ which are compatible with those isomorphisms, i.e. for which there is a commutative diagram
\[
\xymatrix{
T^{(p-1)}(C^\ast)\ar[rr]^{\alpha_C}\ar[d]_{T^{(p-1)}(f)} && C^\ast[2p-2]\ar[d]^{f[2p-2]} \\
T^{(p-1)}(D^\ast)\ar[rr]^{\alpha_D} & &D^\ast[2p-2].
}
\]
We can define a model structure on  $\C^{2p-2}(\B)$  as follows.  
\begin{proposition}[Franke]\label{prop:modelstructure} There is a model structure on  $\C^{2p-2}(\B)$ such that
\begin{itemize}
\item weak equivalences are the quasi-isomorphisms
\item cofibrations are the monomorphisms
\item fibrations are the degreewise split epimorphisms with strictly injective kernel.
\end{itemize}
\end{proposition}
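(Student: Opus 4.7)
The plan is to realise this as an injective-type model structure on a Grothendieck abelian category, in the spirit of Beke and Hovey, but adapted to keep track of the twist isomorphism $\alpha_C$.

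First I would establish the foundations at the level of $\B$. Since $\B$ is the subcategory of $\E_*\E$-comodules concentrated in degrees $0\bmod 2p-2$, and comodules over a flat Hopf algebroid form a Grothendieck abelian category with enough injectives, $\B$ inherits these properties. The self-equivalences $T^{j(p-1)}$ are in particular exact and preserve injective objects, so they interact well with resolutions. This motivates the notion of a \emph{strictly injective} object of $\C^{2p-2}(\B)$: a twisted complex which is degreewise injective in $\B$, whose $\alpha$ is compatible with the chosen injective structure, and which is dg-injective (K-injective) in the usual sense. Once this class is pinned down, the description of fibrations as degreewise split epimorphisms with strictly injective kernel makes the (CM) lifting picture natural.

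Next I would verify the axioms with $W=\{\text{quasi-isos}\}$, $C=\{\text{monos}\}$, $F=\{\text{deg.\ split epis with strictly injective kernel}\}$. The 2-out-of-3 axiom for $W$ and closure of all three classes under retracts are routine. The main technical content is in the factorisations. For factoring a map as a cofibration followed by a trivial fibration, I would run the small object argument on a set of generating monomorphisms coming from sub-objects of injective hulls in $\B$; the twist $\alpha$ is transported along at every transfinite stage because $T^{p-1}$ is an exact equivalence sending injectives to injectives. For the factorisation as a trivial cofibration followed by a fibration, I would build a degreewise injective resolution in $\B$ and assemble it into an object of $\C^{2p-2}(\B)$ using functoriality of the resolution together with the action of $T^{p-1}$. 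The lifting axioms then reduce to two standard facts: a monomorphism lifts against a split epimorphism with injective kernel (for the trivial-cofibration/fibration pair, using that a strictly injective acyclic complex is contractible), and monomorphisms that are quasi-isomorphisms lift against fibrations by the standard injective-object argument applied degree by degree.

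The main obstacle I anticipate is ensuring that every construction respects $\alpha_C$, i.e.\ remains inside $\C^{2p-2}(\B)$ rather than in the ambient $Ch(\B)$. Functorial injective envelopes in $\B$ carry a canonical $T^{p-1}$-equivariant structure because $T^{p-1}$ is a self-equivalence preserving injectives; set-theoretically this lets us choose the generating (trivial) cofibrations already inside $\C^{2p-2}(\B)$, so twist-compatibility is automatic at each step of the small object argument. Once this bookkeeping is in place, the remaining verifications reduce to their classical counterparts for the injective model structure on complexes over a Grothendieck category.
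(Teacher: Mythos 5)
The paper itself does not prove this proposition (it is quoted from Franke, with details in \cite{Franke96} and \cite{Roitzheim08}), so I can only judge your plan on its own terms; its overall shape (an injective-type structure à la Beke--Hovey, adapted to the twist) is reasonable, but two steps as written would not go through. First, your factorisation of a map as a trivial cofibration followed by a fibration is obtained by ``building a degreewise injective resolution in $\B$ and assembling it''. Degreewise injectivity of the kernel is strictly weaker than the strict (dg-)injectivity demanded of fibrations: for unbounded or periodic complexes a levelwise injective complex need not have acyclic $\Hom$ out of acyclic complexes, and this is exactly the hard point of the theorem (in Beke/Hovey language, the existence of generating \emph{trivial} cofibrations, or equivalently of K-injective resolutions). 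Your sketch never addresses it. In the case at hand it can be repaired, but only by invoking a specific input you never use: $\B$ has injective dimension $2$ (\cite[Section 7]{Bousfield}, quoted in the paper), which forces every acyclic complex of injectives to have injective cocycles, hence to be contractible, so that levelwise injective twisted complexes are automatically strictly injective. Without that (or some substitute construction of K-injective resolutions compatible with $\alpha$), the factorisation axiom is unproved. Relatedly, your lifting arguments are misassigned: the contractibility of acyclic strictly injective complexes is what you need for cofibration versus \emph{trivial} fibration, whereas lifting a trivial cofibration against a fibration uses acyclicity of the cokernel together with strict injectivity of the kernel; it is emphatically not ``the standard injective-object argument applied degree by degree'', since a degreewise lift need not be a chain map and degreewise injectivity alone does not suffice.

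Second, the claim that ``functorial injective envelopes in $\B$ carry a canonical $T^{p-1}$-equivariant structure'', so that all constructions automatically stay inside $\C^{2p-2}(\B)$, is asserted rather than proved; a functorial embedding need not commute with a self-equivalence on the nose, and making it do so is a nontrivial rigidification. The standard way around this, and the one implicit in the paper's own fibrant replacement $\prod_{k\in\Z} T^{k(p-1)}I[-k(2p-2)]$, is to work with the periodisation adjunction between ordinary complexes over $\B$ and $\C^{2p-2}(\B)$ (or directly with Franke's quasi-periodic construction), rather than with equivariant envelopes. If you replace your envelope argument by this adjunction and insert the finite-injective-dimension input above, your outline becomes a viable proof; as it stands, the factorisation and twist-compatibility steps are genuine gaps.
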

Here, an object $C^*$ is said to be strictly injective if it is levelwise injective and
for each acyclic complex $D^\ast$, the mapping chain complex $\Hom_{\C^{2p-2}(\B)}(D^\ast,C^\ast)^*$ is again acyclic.

Note that the above model structure is a variant of the standard injective model structure on chain complexes. There is no projective-type model structure on $\C^{2p-2}(\B)$,  as $\B$ has enough injectives but not enough projectives. 

Now let  $\D^{2p-2}(\B)$ be the homotopy category of a model category  of  $\C^{2p-2}(\B)$.   This is the exotic algebraic model we are interested in:  

\begin{theorem}[Franke]\label{T:Franke-model}
For $p\geq 5$ there is an equivalence of triangulated categories
\[
\mathcal{R}: \D^{2p-2}(\B) \longrightarrow \Ho(L_1\Sp)
\]
which satisfies
\[
\bigoplus\limits_{i=0}^{2p-3}H^i(C)[-i]\cong E(1)_*(\mathcal{R}(C)).
\]
\end{theorem}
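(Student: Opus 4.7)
The plan is to build the equivalence $\mathcal{R}$ as a realization functor in the spirit of Bousfield's united $K$-theory program, using the fact that at odd primes the $E(1)$-based Adams spectral sequence for $L_1\Sp$ has enough sparseness to be reconstructible from purely algebraic data. The essential numerical input is that the abelian category $\B$ has homological dimension $1$, while the internal grading is concentrated in multiples of $|v_1|=2p-2$; for $p\geq 5$ the gap $2p-2$ is strictly larger than $\mathrm{dim}(\B)+1$, which rules out all higher differentials and nontrivial extensions in the Adams filtration. This is the abstract mechanism that will force a hom-space in $\D^{2p-2}(\B)$ to agree with one in $\Ho(L_1\Sp)$.

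The construction of $\mathcal{R}$ on objects proceeds as follows. Given a twisted complex $C^\ast\in\C^{2p-2}(\B)$, I would first reduce modulo the twist to obtain a bounded cochain complex $C^0\to C^1\to\cdots\to C^{2p-3}$ in $\B$, interpreted as an injective resolution of the $E(1)_\ast E(1)$-comodule $\bigoplus_i H^i(C)[-i]$. One then realizes this resolution as a tower of spectra: choose a $v_1$-periodic spectrum with $E(1)$-homology $C^0$, realize each differential $d^i$ as a map between $E(1)$-locally injective spectra (possible because all obstructions lie in $\mathrm{Ext}^{\geq 2}_\B$, which vanishes), and assemble into a Postnikov-type filtered spectrum $\mathcal{R}(C)$. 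The twist isomorphism $\alpha_C$ is what guarantees that the resulting spectrum is genuinely $v_1$-periodic, matching the $2p-2$-periodicity of $L_1\Sp$. The claimed formula $\bigoplus_i H^i(C)[-i]\cong E(1)_\ast(\mathcal{R}(C))$ then holds essentially by construction, since each stage of the tower contributes one layer of $E(1)$-homology.

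To verify that $\mathcal{R}$ is an equivalence, I would compute morphisms on both sides and compare. On the spectrum side, the $E(1)$-Adams spectral sequence
\[
E_2^{s,t}=\mathrm{Ext}^{s,t}_\B(E(1)_\ast X, E(1)_\ast Y)\Longrightarrow [X,Y]^{L_1\Sp}_{t-s}
\]
has $E_2$-page concentrated in $s\in\{0,1\}$ and collapses by the sparseness argument above. On the algebraic side, morphisms in $\D^{2p-2}(\B)$ between injective-resolution representatives are computed by an analogous short Ext calculation. Essential surjectivity follows from the fact that every object of $\B$ admits an injective resolution of length $\leq 1$, which assembles to a twisted complex after unrolling via the $T^{(p-1)}$-equivariance.

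The main obstacle, and the source of the gap flagged in \cite{Patchkoria}, is showing that $\mathcal{R}$ is triangulated: one must check that it sends cofibre sequences of twisted complexes to triangles of spectra compatible with the suspension. Triangulations are determined by higher coherence data (Toda brackets, which are represented by Massey products on the algebraic side), so this amounts to matching the Toda brackets of $\pi_\ast L_1 S^0$ with the Massey products in the endomorphism dga of the generator in $\C^{2p-2}(\B)$. The sparseness bound $2p-2>\mathrm{dim}(\B)+1$ again constrains where nontrivial brackets can live, but a direct verification of compatibility is delicate and requires either a functorial model-categorical lift (as in Patchkoria's approach) or an explicit check of signs and naturality. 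This coherence verification, rather than the spectral-sequence bookkeeping, is where the real work lies.
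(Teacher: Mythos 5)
Your argument rests on a homological-dimension claim that is false, and the failure is load-bearing. The category $\B$ does not have injective dimension $1$: its injective dimension is $2$ (Bousfield, and visibly so in this paper, where the Adams--Baird--Ravenel resolution $0 \to \E_* \to \E_*\E \to \E_*\E \to \E_*\otimes\Q \to 0$ has length two). Consequently your assertions that ``every object of $\B$ admits an injective resolution of length $\leq 1$,'' that ``all obstructions lie in $\Ext^{\geq 2}_\B$, which vanishes,'' and that the $E(1)$-Adams $E_2$-page is concentrated in filtrations $s\in\{0,1\}$ are all wrong. Indeed $\Ext^{2}_\B(\E_*,\E_*)$ is nonzero --- it is exactly what produces $H^{2}(C)\cong \Q/\Z_{(p)}$ in this paper's computation, corresponding to the $\Q/\Z_{(p)}$ in $\pi_{-2}(L_1S^0)$. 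So the obstruction-vanishing step by which you realize the differentials spectrum by spectrum, and the collapse-by-concentration argument by which you compare hom-sets, both break down. The sparseness that actually powers Franke's theorem is the comparison between the period $2p-2$ and the injective dimension $2$ (Franke's hypothesis $n^2+n<2p-2$ with $n=1$), not a dimension-one statement; getting this wrong also obscures why the result is asserted for $p\geq 5$ rather than for all odd primes. A further misstep: a general object of $\C^{2p-2}(\B)$ is not an injective resolution of its cohomology, so ``reduce modulo the twist and interpret $C^0\to\cdots\to C^{2p-3}$ as an injective resolution of $\bigoplus_i H^i(C)[-i]$'' is not an available first move.

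For comparison, the paper does not reprove this theorem at all --- it is quoted from Franke, with details referred to Roitzheim and Patchkoria --- and the construction it sketches is different from yours: one passes to the boundaries $B^i$ and the quotients $G^i$ of $C^*$ by its boundaries, realizes these by spectra $X_{\beta_i}$, $X_{\gamma_i}$ with prescribed $E(1)$-homology, arranges them in the crown-shaped diagram, and defines $\mathcal{R}(C)$ as the homotopy colimit; the identification of $E(1)_*(\mathcal{R}(C))$ and the proof that $\mathcal{R}$ is an equivalence then proceed by Adams spectral sequence arguments and diagram chases. Your closing remark that the delicate point is compatibility with the triangulation is broadly consistent with the paper's discussion of the gap noted by Patchkoria, but as written your proposal cannot be repaired by fixing that step alone, since the earlier dimension and vanishing claims are already false.
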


\bigskip
Concerning the equivalence $\mathcal{R}: \D^{2p-2}(\B) \longrightarrow \Ho(L_1\Sp)$,  the notation $\mathcal{R}$ stands for \emph{reconstruction functor}.   Usually one would expect an equivalence between two categories such as the above to have the category of topological origin as its source and the algebraic category as its target. But in this unusual case, the equivalence \emph{reconstructs} a topological object from an algebraic one.  

This reconstruction can be described as follows.  To build a spectrum $X$ from a chain complex $C^*$, one first considers the boundaries $B^i$ of $C^* (1 \leq i \leq 2p-2)$ and the quotients $G^i$ of $C^*$ by its boundaries. Then, one assigns spectra $X_{\beta_i}$ and $X_{\gamma_i}$ to the $B^i$ and $G^i$ respectively, so that
\[
G^i(X)=E(1)_*(X_{\gamma_i})[-i]\,\,\,\mbox{and}\,\,\,B^i(X)=E(1)_*(X_{\beta_i})[-i].
\]

These spectra are now arranged in a crown-shaped diagram 
\[
\xymatrix{      X_{\beta_1} & ... & X_{\beta_{i-1}} & X_{\beta_i} & & X_{\beta_{2p-2}} \\
             X_{\gamma_1} \ar[u]\ar@{.>}[urrrrr] & & X_{\gamma_{i-1}}\ar[u]\ar@{.>}[ul] & X_{\gamma_i}\ar[u]\ar[ul] & ... \ar@{.>}[ul]
            & X_{\gamma_{2p-2}}. \ar[u]
}\]

Then the reconstruction spectrum $X=\mathcal{R}(C^*)$ is defined to be  the homotopy colimit of the above diagram. Proving that this defines an equivalence of categories as stated in Theorem \ref{T:Franke-model} is a lengthy progress involving various Adams spectral sequences and diagram chases.   Once it is completed, however, it is not too hard to read off the following:

\begin{lemma}\label{lem:enddga}
The cochain complex $A^*:=\mathcal{R}^{-1}(L_1S^0)$ is $A^i=T^{k(p-1)}(E(1)_*)$ in degrees $i=k(2p-2), k \in \mathbb{Z}$ and $0$ in all other degrees.
\end{lemma}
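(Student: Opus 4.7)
The plan is to read off $A^\ast$ from the two structural features of $\D^{2p-2}(\B)$: the formula in Theorem \ref{T:Franke-model} relating $\mathcal{R}$ to $E(1)$-homology, and the twisted structure isomorphism built into objects of $\C^{2p-2}(\B)$.

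First I would apply Theorem \ref{T:Franke-model} with $C = A^\ast$ and $\mathcal{R}(A^\ast) = L_1 S^0$, which gives
\[
\bigoplus_{i=0}^{2p-3} H^i(A^\ast)[-i] \cong E(1)_\ast(L_1 S^0) = E(1)_\ast,
\]
the final equality holding because $E(1)$ is already $E(1)$-local, so $E(1)_\ast(L_1 S^0) = E(1)_\ast(S^0) = \Z_{(p)}[v_1^{\pm 1}]$. Each $H^i(A^\ast)$ lies in $\B$ and is therefore concentrated in internal degrees divisible by $2p-2$, so the shift $[-i]$ places $H^i(A^\ast)[-i]$ in internal degrees $\equiv i \pmod{2p-2}$. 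For $0 \leq i \leq 2p-3$ these residue classes are distinct, so the direct sum on the left can be matched against the internal grading of $E(1)_\ast$ modulo $2p-2$. Since $E(1)_\ast$ is supported entirely in degrees $\equiv 0 \pmod{2p-2}$, comparing components yields $H^0(A^\ast) \cong E(1)_\ast$ and $H^i(A^\ast) = 0$ for $1 \leq i \leq 2p-3$.

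Next I would invoke the twisted structure map $\alpha_A \colon T^{(p-1)}(A^\ast) \xrightarrow{\cong} A^\ast[2p-2]$, which forces $A^{k(2p-2)} \cong T^{k(p-1)}(A^0)$ for every $k \in \Z$. To fix a concrete representative of the quasi-isomorphism class, I would take all differentials to be zero and set $A^i = 0$ whenever $i$ is not divisible by $2p-2$. This is an object of $\C^{2p-2}(\B)$ with $\alpha_A$ the identity, its cohomology matches the calculation above, and so it serves as $\mathcal{R}^{-1}(L_1 S^0)$.

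The hard part will be verifying that the $E(1)_\ast E(1)$-comodule structure on $H^0(A^\ast) = E(1)_\ast$ produced abstractly by the isomorphism of Theorem \ref{T:Franke-model} agrees with the canonical Adams operation action on $E(1)_\ast$. This amounts to tracking the twist functors $T^{j(p-1)}$ through the homotopy colimit of the crown diagram that defines $\mathcal{R}$, and should be implicit in the naturality of Franke's reconstruction.
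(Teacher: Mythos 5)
The paper itself records no argument for this lemma (it is presented as something one ``reads off'' from the construction of $\mathcal{R}$), so the question is whether your argument stands on its own. Its first half does: applying Theorem \ref{T:Franke-model} with $\mathcal{R}(A^\ast)\simeq L_1S^0$, using $E(1)_\ast(L_1S^0)\cong E(1)_\ast$ (the localisation map $S^0\to L_1S^0$ is an $E(1)_\ast$-equivalence) and the fact that objects of $\B$ are concentrated in internal degrees divisible by $2p-2$, does force $H^0(A^\ast)\cong E(1)_\ast$ and $H^i(A^\ast)=0$ for $1\le i\le 2p-3$, and the structure isomorphism $\alpha_A$ propagates this periodically. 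Your worry at the end about the comodule structure is not the real issue: the isomorphism in Theorem \ref{T:Franke-model} is one of $E(1)_\ast E(1)$-comodules, so the residue-class comparison already identifies $H^0(A^\ast)$ with the standard comodule $E(1)_\ast$, that is, with the unit of $\B$.

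The genuine gap is the sentence ``its cohomology matches the calculation above, and so it serves as $\mathcal{R}^{-1}(L_1S^0)$.'' Having the same cohomology as $\mathcal{R}^{-1}(L_1S^0)$ does not by itself make your zero-differential complex isomorphic to it in $\D^{2p-2}(\B)$: the category $\B$ is not hereditary (its injective dimension is $2$), so objects of the derived category are not determined by their cohomology in general. What rescues the step is sparseness: the cohomology of $A^\ast$ sits only in degrees divisible by $2p-2$, and the k-invariants obstructing a splitting of an object with cohomology in degrees $m>n$ lie in $\Ext^{m-n+1}_{\B}(H^m,H^n)$, which vanishes as soon as $m-n+1>2$; since here $m-n\ge 2p-2\ge 8$ for $p\ge 5$, every object of $\D^{2p-2}(\B)$ with cohomology concentrated in degrees $\equiv 0 \pmod{2p-2}$ is isomorphic to its cohomology with zero differentials. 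You must say this explicitly --- it is precisely the mechanism (injective dimension small compared with the period) that makes Franke's construction work at all --- or else argue in the opposite direction: feed the explicit periodic complex into the crown-diagram construction of $\mathcal{R}$, note that all boundaries $B^i$ vanish and only one quotient $G^i$ is nonzero, so the homotopy colimit is the spectrum realising the comodule $E(1)_\ast$, namely $L_1S^0$, and conclude from $\mathcal{R}$ being an equivalence. As written, the decisive step is asserted rather than proved.
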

\qed

\section{The endomorphism dga}\label{sec:enddga}

Recall from Section \ref{sec:algmodels} that in order to understand an algebraic model, we want to study the endomorphism dga of a compact generator.  We know that the 
cochain complex of Lemma \ref{lem:enddga}
\begin{align*}
A^* & =&  \cdots &\longrightarrow & 0 & \longrightarrow & T^{-(p-1)}E(1)_* & \longrightarrow & 0 & \longrightarrow  & \cdots  \\ 
& &  \cdots & \longrightarrow &  0 & \longrightarrow &  E(1)_* & \longrightarrow &  0 & \longrightarrow & \cdots \\& & \cdots & \longrightarrow & 0 & \longrightarrow &  T^{(p-1)}E(1)_* & \longrightarrow & 0 & \longrightarrow & \cdots   
\end{align*}
is a compact generator for $\D^{2p-2}(\B)$. Hence, to understand Franke's model we need to study the endomorphism dga $C^*$ of $A^*$, i.e.
\[
C^*:= \HOM_{\C^{2p-2}(\B)}(A^*, A^*).
\]
 By 
 construction,
\[
H^{t-s}(C^*)= \Ext_\B^{s, t}(\E_*, \E_*)
\]
which  is the $E^2$-term of the $\E_*$-based Adams spectral sequence for $\pi_*(L_1S^0)$.    Examining the degrees shows that this spectral  collapses, giving an isomorphism    $H^n(C^*) = \pi_n(L_1S^0)$.    \bigskip

We intend to  unravel what $C^*$ looks like as $\mathbb{Z}_{(p)}$-module and obtain a concrete description of this chain complex.    
We begin by considering the general form of  any mapping chain complex $\HOM_{\C^{2p-2}(\B)}(X^*, Y^*)$ for arbitrary $X^*, Y^* \in \C^{2p-2}(\B).$   This satisfies
\[
\HOM_{\C^{2p-2}(\B)}(X^*, Y^*)= \D^{2p-2}(\B)(X^*, Y^*).
\]
When $X^*$ and $Y^*$ are  concentrated in one degree up to periodicity, i.e. $$X^*= \prod\limits_{k \in \mathbb{Z}}T^{k(p-1)}X[-k(2p-2)] \,\,\,\mbox{and}\,\,\, Y^*= \prod\limits_{k \in \mathbb{Z}}T^{k(p-1)}Y[-k(2p-2)]$$ for some $X,Y \in \B$, we have
\[
H^{n-i}(\HOM_{\C^{2p-2}(\B)}(X^*, Y^*))=\prod\limits_{i}\Ext_\B^{i,n}(X,Y).
\]
We examine what such a morphism in  $\C^{2p-2}(\B)$ looks like when  $X^*$ is cofibrant and $Y^*$ is fibrant. We will see that all morphisms $f^*: X^* \longrightarrow Y^*+s$ are not only determined by the first $f^0, ..., f^{2p-3} \in \B$ but also solely by the low-degree terms of $X^*$ and $Y^*$. 

\bigskip
Firstly, by definition of the category $\C^{2p-2}(\B)$ in Section \ref{sec:franke}, a morphism satisfies  $$f^{*+2p-2} \cong T^{p-1}(f^*).$$   
This means that for example
a map $f^*: X^* \longrightarrow Y^*$ of degree $0$ is defined by morphisms  $f^i:  X^i \to Y^i$ in $\B$ for $0 \leq i \leq 2p-3$, 
and similarly,  a map $f^*$ of degree $n \in \mathbb{Z}$ is determined by   $f^i: X^i \longrightarrow Y^{i+n}$ for $0 \le i \le 2p-3$. 

\bigskip 
However, we also claim that a morphism of degree $n=(2p-2)r+s$, $0 \leq s \leq 2p-3$ is in fact already defined by a morphism of degree $s$ in $\B$ between the lower degrees of $X^*$ and $Y^*$, i.e. the low-degree morphisms define the entire mapping chain complex.

To see this, consider a morphism of degree $2p-2$, determined by 
\begin{eqnarray}
f^0: & X^0 \longrightarrow Y^{2p-2}\cong T^{p-1}(Y^0) \nonumber \\
f^1: & X^1 \longrightarrow Y^{2p-1}\cong T^{p-1}(Y^1) \nonumber \\
\vdots & \vdots \nonumber \\
f^{2p-3}: &  X^{2p-3} \longrightarrow Y^{4p-5} \cong T^{p-1}(Y^{2p-3}). \nonumber
\end{eqnarray}
Consider the map $f^0$ in $\B$.   Recall that  objects in $\B$ are themselves graded, and denote this internal degree by a subscript. Therefore, by definition of $T^{p-1}$,
\[
f^0=f^0_*: X^0_* \longrightarrow Y^{2p-2}_* \cong T^{p-1}(Y^0)_* \cong Y^0_{2p-2}.
\]
Any morphism in $\B$ $$F_*: M_* \longrightarrow N_*$$  is given by a $\mathbb{Z}_{(p)}$-module map satisfying $F \circ \Psi^k = \Psi^k \circ F$ for the Adams operation $\Psi^k$, {} $k \in \mathbb{Z}_{(p)}$. Thus, a morphism $$F: M_* \longrightarrow T^{p-1}(N)_*=N_{*+2p-2}$$ is a $\mathbb{Z}_{(p)}$-module map $$F: M_* \longrightarrow N_{*+2p-2}$$ satisfying $F(\Psi^k x)= k^p F(X).$ Now we also have \cite[Section 4.2]{Bousfield},
\[
\Psi^k(v_1 \cdot y)=k^{p-1}v_1 \cdot \Psi^k(y) = k^p v_1 y,
\]
 which means that $F$ factors as
\[
M_* \stackrel{G}{\longrightarrow} N_* \stackrel{\cdot v_1}{\longrightarrow} N_{*+2p-2}
\]
where $G$ is a map in $\B$ of degree $0$, and multiplication by $v_1$ is an isomorphism. \bigskip

Returning to our map of chain complexes $$f^0: X^0 \longrightarrow Y^{2p-2},$$ we see that $f^0$ can be factored as $f^0=v_1 \cdot g_0$, where $g_0: X^0 \longrightarrow Y^0$ is a morphism in $\B$. Similarly,  any map $$f^*: X^* \longrightarrow Y^* \in \C^{2p-2}(\B)$$ of degree $n=(2p-2)r + s$, $0 \leq s \leq 2p-3$ is determined by a map of degree $s$. Thus,
\[
\HOM_{\C^{2p-2}(\B)}(X^*, Y^*)^*= \prod\limits_{n \in \mathbb{Z} } \Hom_\B(X^*, Y^{*+n})=\prod\limits_{0 \leq i, s \leq 2p-3} \Hom_\B(X^i, Y^{i+s}).
\]
Note that we have not yet considered the internal grading. The object $$\Hom_\B(X^i, Y^{i+s})$$ is a graded $\E_*$-module, with the grading coming from the internal grading in $\B$ on $X^i=X^i_*$ and $Y^{i+s}=Y^{i+s}_*$. We say that an element in $ \Hom_\B(X^i, Y^{i+s})$ has degree $t$ if it raises the internal degree by $t$. As we will consider each degree separately, we use  $ \Hom_\B(X^i_*, Y^{i+s}_{*+t})$ to denote those morphisms in $\B$ that raise degree by $t$.  So in our notation,  this is only a $\mathbb{Z}_{(p)}$-module and \emph{not} an $\E_*$-module.   In particular,  $\Hom_\B(X^i, Y^{i+s})$ is \emph{not} a graded object. 

\bigskip
Taking this internal degree into account, we define $\HOM_{\C^{2p-2}(\B)}(X^*, Y^*)^*$ to be the chain complex defined in degree $n$ by
\[
\HOM_{\C^{2p-2}(\B)}(X^*, Y^*)^n=\prod\limits_{\substack{n=t-s, \\ 0\leq i,  s \leq 2p-3} } \Hom_\B(X^i_{*-t}, Y^{i+s}_*),
\]
i.e. as shown earlier, the mapping chain complex is defined only by low-degree terms of the chain complexes as well as low-degree morphisms.
(Recall that we are assuming that  $X^*$ is cofibrant and $Y^*$ is fibrant.  If this is not the case,  cofibrant  and  fibrant replacements need to be applied.)   The $n^{th}$ differential is given by
\[
d_A \circ f + (-1)^{n+1} f \circ d_B.
\]
The grading is consistent with the equivalence given in Lemma \ref{lem:enddga}
\[
\pi_{t-s}(L_1S^0)=\Ext^{s,t}_\B(\E_*, \E_*) = H^{t-s} \HOM(A^*, A^*)
\]
where $A^*$ is the compact generator.  Explicitly $A^*$ is  the cochain complex which is $A^i=T^{k(p-1)}(E(1)_*)$ in degrees $i=k(2p-2), k \in \mathbb{Z}$ and $0$ in all other degrees.

\bigskip
In order to apply the above discussion to our endomorphism complex, we need to find a fibrant and cofibrant replacement for $A^*$. The model structure of  Proposition \ref{prop:modelstructure} implies that any object in $\C^{2p-2}(\B)$ is cofibrant, so in fact we only need a fibrant replacement.

 To produce a fibrant replacement, we will use an injective resolution
\begin{equation}\label{inj-res}
0 \longrightarrow \E_* \longrightarrow I^0 \longrightarrow I^1 \longrightarrow I^2 \longrightarrow 0
\end{equation}
of $E(1)_*$ as an $\E_*\E$-comodule.  Since $A^*$ is $\E_*$ repeated periodically using the self-equivalence $T^{(p-1)}$,  we will obtain an injective resolution of $A^*$  by taking the injective resolution above and repeating it periodically, again applying the self-equivalence $T^{(p-1)}$.   Since $p$ is odd and the injective dimension of $\B$ is 2 (as is the injective dimension of $\E_*\E$-comod)  \cite[Section 7]{Bousfield}, the pieces from the injective resolution do not overlap in the cochain complex.  

For the injective resolution in (\ref{inj-res}), we will use the standard injective resolution by Adams-Baird-Ravenel \cite{ABR}
\begin{equation}\label{eqn:resolution}
0 \longrightarrow \E_* \longrightarrow \E_*\E \xrightarrow{(\Psi^r-1)_*} \E_*\E \xrightarrow{\,\,q\,\,} \E_* \otimes \mathbb{Q} \longrightarrow 0
\end{equation}
where  $r$ is a unit of the cyclic group $(\mathbb{Z}/p^2)^\times$, $\Psi^r$ is the $r^{th}$ Adams operation and $q$ is induced by the map $E(1) \longrightarrow H\mathbb{Q}$ that is a rational homotopy isomorphism in degree $0$ and trivial otherwise.

Note that this resolution $I$ does not consist of injective comodules but of \emph{relative injective} comodules, see \cite[Definition 3.1.1]{Hovey:comodules}, i.e. the functor $\Hom_\B(-,I)$ sends split short exact sequences of $\E_*$-modules to short exact sequences. By definition,
\[
\Ext^*_\B(\E_*, \E_*) = H^*(\Hom_\B(\E_*, J))
\]
where $J$ is an injective resolution of $\E_*$. As $J$ is injective, one also has
\[
\Hom_\B(\E_*,J) \simeq \Hom_\B(J, J).
\]
By \cite[Lemma 3.1.4]{Hovey:comodules} the above is quasi-isomorphic to $\Hom_\B(\E_*, I)$ with $I$ our relative injective resolution. Now consider the split exact sequence of $\E_*$-modules
\[
0 \longrightarrow \E_* \longrightarrow I \longrightarrow K \longrightarrow 0
\]
where $K$ is the cokernel of the first map. This $K$ is bounded above and below as well as acyclic, so it is contractible. Thus, by \cite[Lemma 3.3.3]{Hovey:comodules}, every map $K \longrightarrow I$ is chain homotopic to the zero map, so $\Hom_\B(K,I)\simeq 0$ and consequently
\[
\Hom_\B(\E_*, I) \simeq \Hom_\B(I, I) \simeq \Hom_\B(J, J),
\]
which is what we are using. 

\bigskip
Thus we create a relative injective replacement equivalent to the fibrant replacement 
\begin{align*}
(A^{fib})^* & =&  \nonumber\\
 \cdots 0 & \longrightarrow & T^{-(p-1)}I^0& \longrightarrow &  T^{-(p-1)}I^1 & \longrightarrow & T^{-(p-1)}I^2 & \longrightarrow & 0 &\longrightarrow & \cdots \nonumber \\
\cdots 0 & \longrightarrow & I^0 & \longrightarrow & I^1 & \longrightarrow &  I^2 & \longrightarrow & 0 & \longrightarrow & \cdots \nonumber \\
\cdots 0 & \longrightarrow &  T^{(p-1)}I^0& \longrightarrow &  T^{(p-1)}I^1 & \longrightarrow & T^{(p-1)}I^2&   \longrightarrow &  0 & \longrightarrow & \cdots \nonumber
\end{align*}

In other words, $$(A^{fib})=\mathbb{R}I=\prod\limits_{k \in \mathbb{Z}} T^{k(p-1)}I[-k(2p-2)]$$
with $$I=(...0 \rightarrow I^0 \rightarrow I^1 \rightarrow I^2 \rightarrow 0 ...) \in Ch(\B)$$ and $[n]$ denoting the $n^{th}$ suspension.

Returning to the definition of the endomorphism complex $C^*$, we have that  $$C^*:=\Hom_{\C^{2p-2}(\B)}((A^{fib})^*, (A^{fib})^*)$$ is entirely determined by the terms of the form
\[
\Hom_\B(I^j, I^k),  \textup{ \,\, where \,\,} i,j \in \{0,1,2\}.
\]
So we have to calculate nine potential terms:  
\begin{multline}
C^n := \Hom_{\C^{2p-2}(\B)}((A^{fib})^*, (A^{fib})^*)^n = \prod\limits_{n=t-s, i}\Hom_\B((I^i)_{*-t}, (I^{i+s})_*) \nonumber \\
\end{multline}
\begin{align*}
& =&  &\Hom_\B((I^0)_{*-n}, (I^{0})_*) & \times &  \Hom_\B((I^0)_{*-(n-1)}, (I^{1})_*) & \times&  \Hom_\B((I^0)_{*-(n-2)}, (I^{2})_*) \nonumber \\
& &\times &  \Hom_\B((I^1)_{*-(n+1)}, (I^{0})_*) & \times&  \Hom_\B((I^1)_{*-n}, (I^{1})_*) & \times&  \Hom_\B((I^1)_{*-(n-1)}, (I^{2})_*) \nonumber \\
& & \times & \Hom_\B((I^2)_{*-(n+2)}, (I^{0})_*) & \times&  \Hom_\B((I^2)_{*-(n+1)}, (I^{1})_*) & \times & \Hom_\B((I^2)_{*-n}, (I^{2})_*) \nonumber
\end{align*}
and specify the differentials between those terms.  

\bigskip
Since the terms appearing in the sequence (\ref{eqn:resolution}) are either of the form $E(1)_*E(1)$ or $E(1)_* \otimes \mathbb{Q}$, the nine terms above can be grouped into four types of the following form:
\begin{enumerate}[(I)]
\item $\Hom_\B(\E_{*-t}\E, \E_*\E)$
\item $\Hom_\B(\E_{*-t}\E, \E_* \otimes \mathbb{Q})$
\item $\Hom_\B(\E_{*-t}\otimes \mathbb{Q}, \E_*\E)$
\item $\Hom_\B(\E_{*-t}\otimes \mathbb{Q}, \E_*\otimes\mathbb{Q})$
\end{enumerate}
All of the above are trivial unless $t$ is a multiple of $2p-2$. By \cite[Appendix A1]{Ravenel Green Book} we have the following natural isomorphism 
\begin{equation}\label{E:rav}
\Hom_{\E_*}(M,N) \cong \Hom_\B(M, \E_*\E \otimes_{\E_*} N)
\end{equation}
for $\E_*$-modules $M$ and $N$. Applying this to the terms above yields the following. 

\bigskip
{\bf Type (I)} 
The isomorphism (\ref{E:rav}) gives
\begin{eqnarray*}\Hom_\B(\E_{*-t}\E, \E_*\E)& \cong&  \Hom_{\E_*}(\E_{*-t}\E, \E_*) \\  & \cong& \Hom_{\mathbb{Z}_{(p)}}(\E_0\E, \mathbb{Z}_{(p)}(v_1^k)) \textup{ \phantom{W} for } t=(2p-2)k\end{eqnarray*} 
\bigskip

{\bf Type (II)}
Here, we have to distinguish between $t=0$ and $t \neq 0$. Let us begin with $t=0$. By \cite{Adams-Clarke}, $\E_*\E$ consists of Laurent polynomials living in $\mathbb{Q}[u,u^{-1},w,w^{-1}]$ with $|u|=|w|=2p-2$ satisfying certain conditions. (We can think of $u$ and $w$ as ``two copies of $v_1$'' in terms of the $\E_*$-action.) Furthermore,
\[
\E_*\E \otimes \mathbb{Q}\cong \mathbb{Q}[u, u^{-1}, w, w^{-1}].
\]
Now let $f \in \Hom_\B(\E_*E, E_*\otimes \mathbb{Q}).$ By definition, $f$ is an $\E_*$-module homomorphism, and also the following diagram has to commute.

\[
\xymatrix{ \E_*\E \ar[r]^{f}\ar[d]_{\Delta} & \E_*\otimes\mathbb{Q}=\mathbb{Q}[v_1,v_1^{-1}] \ar[d]^{\psi}  \\
\E_*\E \otimes_{\E_*} \E_*\E \ar[r]^<<<<{1 \otimes f} & \E_*\E \otimes_{\E_*} \E_* \otimes \mathbb{Q} = \E_*\E \otimes \mathbb{Q}. \\
} \]

For the coactions, we have 
\[
\Delta{u^iw^j}=u^iw^{2j} \,\,\,\mbox{and}\,\,\,\psi(v_1^i)=u^i,
\]
i.e. $w$ is group-like, see e.g. \cite{AHS} or \cite{Johnson}.

Since $f$ is an $E_*$-module homomorphism,  we have
\[
f(u^iw^j)=v_1^{i+j}f(1).
\]
Putting this information together we see that the diagram can only commute if $j=0$ or $f$ is the zero map. So, $f$ has to be zero on $w$, i.e. is only supported on Laurent polynomials in $u$. Thus, $f$ can be considered as an $\E_*$-module homomorphism from $\E_* \cong \mathbb{Z}_{(p)}[u, u^{-1}]$ to $\E_* \otimes \mathbb{Q}$. Thus, we have
\[
\Hom_\B(\E_*\E, \E_*\otimes \mathbb{Q})\cong \Hom_{\E_*}(\E_*, \E_* \otimes \mathbb{Q}) \cong \mathbb{Q}.
\]
Note that the last isomorphism holds as we are only considering degree-preserving morphisms. Furthermore, note that the 1 in the last $\mathbb{Q}$ corresponds to the morphism that sends $w$ to 0 and $u$ to 1, which is exactly $q$ from the Adams-Baird-Ravenel resolution (\ref{inj-res}).

\bigskip
Let us now look at the case $t \neq 0$:

Analogously to the previous argument, 
\[
\Hom_\B(\E_*E, \E_{*+t}\otimes \mathbb{Q})\cong \Hom_{\E_*}(\E_*, \E_{*+t} \otimes \mathbb{Q}) \cong \mathbb{Q}.
\]
As a module over $\mathbb{Q}$, this is generated by the function sending $w$ to $0$ and $u$ to $v_1^{s-1}$ for $t=s(2p-2)$. 


\bigskip
{\bf Type (III)}
Every $\E_*$-module is in particular a $\mathbb{Z}_{(p)}$-module, and so every element of (III) is in particular a $\mathbb{Z}_{(p)}$-module homomorphism from $\mathbb{Q}$ to $\mathbb{Z}_{(p)}$. Thus, all terms of the form (III) are zero. 

\bigskip
{\bf Type (IV)}
A morphism
\[
f \in \Hom_\B(\E_* \otimes \mathbb{Q}, \E_*\otimes \mathbb{Q})
\]
is entirely determined by $f(1) \in \E_0 \otimes \mathbb{Q}=\mathbb{Q}$, so for the degree $t=0$ we have
\[
\Hom_\B(\E_* \otimes \mathbb{Q}, \E_*\otimes \mathbb{Q}) = \mathbb{Q}.
\]

For $t \neq 0$, a term of Type (IV) is trivial.
If $t$ is not a multiple of $(2p-2)$, this is already clear for degree reasons. For $t=s(2p-2), s \neq 0$ we have the following. An morphism $$f: \E_* \otimes \mathbb{Q} \longrightarrow \E_{*+t}\otimes \mathbb{Q}$$ in $\mathbb{B}$ is an $\E_*$-module homomorphism which is compatible with Adams operations. The $\E_*$-module homomorphisms are given by
\[
\Hom_{\E_*}(\E_* \otimes \mathbb{Q}, \E_{*+t}\otimes \mathbb{Q}) = \mathbb{Q},
\]
which is generated over $\mathbb{Q}$ by the map that sends $1$ to $v_1^s$. The Adams operations are given by $\Psi^k(v_1^i)=k^{i(p-1)}v_1^s$, in particular $\Psi^k(v_1^2)=k^{2(p-1)}v_1^2$. 

We also know that $\E_{*+t}$ is also isomorphic in $\B$ to $T^{s(p-1)}\E_*$, therefore 
\[
\Hom_\B(\E_* \otimes \mathbb{Q}, \E_{*+t}\otimes \mathbb{Q}) \cong \Hom_\B(\E_* \otimes \mathbb{Q}, T^{s(p-1)}\E_*).
\]
There, by definition of $T$,
\[
\Psi^k(v_1^2)=k^{s(p-1)}\Psi^k_{old}(v_1^2)=k^{s(p-1)}(k^{(2p-2)}v_1^2)=k^{(s+2)(p-1)}v_1^2.
\]
However, this can only be equal to the previously calculated $k^{2(p-1)}v_1^2$ if $s=0$, which proves that for $t \neq 0$,
\[
\Hom_\B(\E_* \otimes \mathbb{Q}, \E_{*+t}\otimes \mathbb{Q}) = 0.
\]

\bigskip
Now that we have identified the forms of the terms I--IV in the endomorphism complex, let us consider the  differentials.  A differential from $C^n$ to $C^{n+1}$ is of the form $d \circ f + (-1)^{n+1} f\circ d$. We illustrate its individual parts in the diagram below, where a solid arrow represents a possible nontrivial $d \circ f$ and a dashed arrow represents a possible nontrivial $f \circ d$.  In addition, each term has been labeled with its type (I-IV).  

\begin{tikzcd}
\mbox{(I) }\Hom_\B((I^0)_{*-n}, (I^{0})_*)  \arrow[rrd, start anchor=east, end anchor=north west] && \Hom_\B((I^0)_{*-(n+1)}, (I^{0})_*) \mbox{ (I)}\\
\mbox{(I) }\Hom_\B((I^0)_{*-(n-1)}, (I^{1})_*) \arrow[rrd, start anchor=east, end anchor=north west] && \Hom_\B((I^0)_{*-n}, (I^{1})_*)\mbox{ (I)}\\
\mbox{(II) }\Hom_\B((I^0)_{*-(n-2)}, (I^{2})_*) \rightarrow 0 && \Hom_\B((I^0)_{*-(n-1)}, (I^{2})_*)\mbox{ (II)}\\
\mbox{(I) }\Hom_\B((I^1)_{*-(n+1)}, (I^{0})_*) \arrow[rrd, start anchor=east, end anchor=north west] \arrow[rruuu, dashed, start anchor=north east, end anchor=west,controls={+(1,0.2) and +(-1.5,0.2)}]&& \Hom_\B((I^1)_{*-(n+2)}, (I^{0})_*)\mbox{ (I)}\\
\mbox{(I) }\Hom_\B((I^1)_{*-n}, (I^{1})_*) \arrow[rrd, start anchor=east, end anchor=north west] \arrow[rruuu, dashed, start anchor=north east, end anchor=west,controls={+(1,0.2) and +(-1.5,0.2)}] && \Hom_\B((I^1)_{*-(n+1)}, (I^{1})_*)\mbox{ (I)}\\ 
\mbox{(II) }\Hom_\B((I^1)_{*-(n-1)}, (I^{2})_*) \arrow[rruuu, dashed, start anchor=east, end anchor=west,controls={+(1,0.2) and +(-1.5,0.2)}] && \Hom_\B((I^1)_{*-n}, (I^{2})_*)\mbox{ (II)}\\
\mbox{(III) }\Hom_\B((I^2)_{*-(n+2)}, (I^{0})_*)=0 && \Hom_\B((I^2)_{*-(n+3)}, (I^{0})_*)=0\mbox{ (III)}\\
\mbox{(III) }\Hom_\B((I^2)_{*-(n+1)}, (I^{1})_*)=0 &&\Hom_\B((I^2)_{*-(n+2)}, (I^{1})_*)=0\mbox{ (III)}\\ 
\mbox{(IV) }\Hom_\B((I^2)_{*-n}, (I^{2})_*) \arrow[rruuu, dashed, start anchor=east, end anchor=west, controls={+(1,0.2) and +(-1.5,0.2)}] && \Hom_\B((I^2)_{*-(n+1)}, (I^{2})_*)\mbox{ (IV)}
\end{tikzcd}
\bigskip

For any other values of $n$, the dga will be zero.  Combining this information with the interpretations of Terms (I)-(IV), we see that the non-zero terms of the endomorphism dga look like:  
 
\hspace{-1cm}\begin{tikzcd}[column sep=-15ex, row sep=10ex]
 C^{(2p-2)k-1} \ar[r] & C^{(2p-2)k} \ar[r] & C^{(2p-2)k+1} \ar[r] & C^{(2p-2)k+2} \\
  & \Hom_\B(\E_{*-n}\E, \E_*\E) \ar[rd,"\Psi_*"] &   & \\
\Hom_\B(\E_{*-n}\E, \E_*\E) \ar[ru,,"\Psi^*"] \ar[rd,"\Psi_*"] &   & \Hom_\B(\E_{*-n}\E, \E_*\E) \ar[rd,"q_*"] & \\
  & \Hom_\B(\E_{*-n}\E, \E_*\E) \ar[ru,,"\Psi^*"] \ar[rd,"q_*"] &   & \Hom_\B(\E_{*-n}\E, \E_*\otimes\Q) \\
  &   & \Hom_\B(\E_{*-n}\E, \E_*\otimes\Q) \ar[ru,,"\Psi^*"]\\ 
  &  \Hom_\B(\E_{*-n}\otimes\Q, \E_*\otimes\Q)\ar[ru,,"q^*"]
 \end{tikzcd}

\bigskip
Here, $\Psi^*$, $\Psi_*$, $q^*$ and $q_*$ refer to (pre)composing with $\Psi=(\Psi^r-1)$ and $q$ from the Adams-Ravenel-Baird resolution (\ref{eqn:resolution}). 


\begin{remark}
When $p=3$, degree reasons do not rule out a differential $$C^{(2p-2)k+2} \rightarrow C^{(2p-2)k+3}.$$ However, the actual definition of the differential in terms of $\Psi$ and $q$ means that no nontrivial such differential exists.

\end{remark}

\section{Reinterpretation as Sequences}\label{sec:sequences}

We now turn to creating an explicit description of the sequence described in the previous section.  As noted above, terms of Type (III) are trivial, terms of Type (II) give a single copy of $\mathbb{Q}$.  We here consider the other, not so simple terms of Type (I).   

As mentioned above, by \cite[Appendix A1]{Ravenel Green Book},
\begin{equation} \label{E:rav2}
\Hom_\B(\E_{*-n}\E, \E_*\E) \cong \Hom_{\E_*}(\E_*\E, \E_{*+n}).
\end{equation}
Since $\E_*\E$ is free as an $\E_*$-module \cite[Theorem 2.1]{Adams-Clarke}, 
\[
\Hom_{\E_*}(\E_*\E, \E_{*+n}) \simeq
\Hom_{\mathbb{Z}_{(p)}}(\E_0\E, \mathbb{Z}_{(p)}[v_1^k]), \,\,\, n=(2p-2)k.
\]
This dual has been considered in \cite{Clarke-Crossley-Whitehouse}, where it is shown that 
\[
\Hom_{\mathbb{Z}_{(p)}}(\E_0\E, \mathbb{Z}_{(p)}[v_1^k]) \cong \E^0\E.
\]

Furthermore, by \cite[Theorem 6.2]{Clarke-Crossley-Whitehouse} this can be uniquely expressed as a formal series 
\[
\E^0\E \cong \{ \sum\limits_{n\ge 0} a_m \Theta_m(\Psi^r) \,\,|\,\, a_m \in \mathbb{Z}_{(p)}\}
\]
see also \cite[Proposition 18]{Strong-Whitehouse}. Here,   $\Theta_m$ is an explicit polynomial in the Adams operation $\Psi^r$   (where $r$ a generator of $(\mathbb{Z}/p^2)^\times$)  defined as follows:    \cite[Definition 6.1]{Clarke-Crossley-Whitehouse}: 
\begin{eqnarray}
\Theta_0(\Psi^r) &= & 1, \nonumber\\ \Theta_1(\Psi^r) & = & (\Psi^r-1),\nonumber\\ \Theta_2(\Psi^r) &=& (\Psi^r-1)(\Psi^r-r),\nonumber\\ \Theta_3(\Psi^r)& = & (\Psi^r-1)(\Psi^r-r)(\Psi^r-r^{-1}), \nonumber \\ \Theta_4(\Psi^r)&=&(\Psi^r-1)(\Psi^r-r)(\Psi^r-r^{-1})(\Psi^r-r^2), \nonumber\\ \Theta_5(\Psi^r)&=&(\Psi^r-1)(\Psi^r-r)(\Psi^r-r^{-1})(\Psi^r-r^2)(\Psi^r-r^{-2}) \nonumber \\ \mbox{etc.} & & \nonumber
\end{eqnarray}

This means that we can view the elements of $\Hom_\B(\E_{*-n}\E, \E_*\E)$ as sequences of coefficients in $p$-local integers,
\[
\Hom_\B(\E_{*-n}\E, \E_*\E) \cong \{ (a_m)_{m \in \N} \,\,|  a_m \in \mathbb{Z}_{(p)}\}=\mathbb{Z}_{(p)}^\mathbb{N}.
\]


For simplicity of notation, will denote a sequence $(a_m)_{m \in \N} $ by $\langle a_m \rangle$.    

\subsection{The formulas on sequences}\label{S:form}

To get the differential, we need to translate the following maps over to the sequence representation: 
\begin{eqnarray*}
\Psi^*: \mathbb{Z}_{(p)}^\mathbb{N} \longrightarrow\mathbb{Z}_{(p)}^\mathbb{N} \\
\Psi_*: \mathbb{Z}_{(p)}^\mathbb{N}\longrightarrow \mathbb{Z}_{(p)}^\mathbb{N}\\
\end{eqnarray*}

and furthermore,
\begin{eqnarray*}
\Psi^*:  \mathbb{Q} \longrightarrow\mathbb{Q} \\
q_*: \mathbb{Z}_{(p)}^\mathbb{N}  \longrightarrow \mathbb{Q} \\
q^*: \mathbb{Q} \longrightarrow \mathbb{Q}.
\end{eqnarray*}
 
\noindent {{\bf The map } $\bm{\Psi_*}$}:  We start by considering the map $\Psi_* = (\Psi^r-1)_*$ given by  composition with the map $\Psi^r-1$.   Chasing through our equivalences, we have 
\[ 
\xymatrix{
\Hom_{\E_*\E}(\E_{*}\E, \E_*\E)_t \ar[rr]^{(\Psi^r-1)_*} \ar[d]^{\simeq} && \Hom_{\E_*\E}(\E_{*}\E, \E_*\E)_t \ar[d]^{\simeq} \\
\Hom_{\E_*}(\E_{*}\E, \E_*)_t \ar[rr]^{(\Psi^r-1)_*} \ar[d]^{\simeq} && \Hom_{\E_*}(\E_{*}\E, \E_*)_t \ar[d]^{\simeq} \\
\Hom_{\mathbb{Z}_{(p)}}(\E_0\E, \mathbb{Z}_{(p)}[v_1^k])  \ar[rr] \ar[d]^{\simeq} && \Hom_{\mathbb{Z}_{(p)}}(\E_0\E, \mathbb{Z}_{(p)}[v_1^k]) \ar[d]^{\simeq} \\
\E^0 \E \ar[d]^{\simeq} \ar[rr] &&\E^0\E \ar[d]^{\simeq}\\
 \{ \sum\limits_{m\ge 0} a_m \Theta_m(\Psi^r) \,\,|\,\, a_m \in \mathbb{Z}_{(p)}\} \ar[rr] &&  \{ \sum\limits_{m\ge 0} a_m \Theta_m(\Psi^r) \,\,|\,\, a_m \in \mathbb{Z}_{(p)}\}
\\ } \]

 To calculate $(\Psi^r-1)_*$ we can work on the $v_1^k$ level.  Note that  when $k=0$, $\Psi^r$ acts as the identity, and so  $(\Psi^r-1)_* = 0$. For $k \neq 0$, we know that up to a $p$-local unit, \begin{align*} (\Psi^r-1)_*(v_1^k) & = (r^{k(p-1)} -1)v_1^k \\ & =  p^{\nu(k)+1}v^k_1  
\end{align*}
Therefore we can see that $(\Psi^r-1)_*$ is given by multiplication by $p^{\nu(k)+1}$.   

\bigskip
\noindent {{\bf The map } $\bm{\Psi^*}$} on $\Z_{(p)}$-sequences: Chasing through the effect of $(\Psi^r-1)^*$ is slightly more involved.  Starting with the $k=0$ case,  we see that since the vertical isomorphisms in the last step are ring isomorphisms, the overall effect on the sequences is multiplication by $\Theta_1(\Psi^r)$. In all that follows, we will write $\Theta_i$ in place of $\Theta_i(\Psi^r)$.  Then we can calculate:

\begin{align*}
\Theta_0\Theta_1 & = \Theta_1 \\
\Theta_m\Theta_1 & = (\Psi^r-1)(\Psi^r-r)(\Psi^r-r^{-1}) \cdots (\Psi^r-r^{\tilde{s}(m)})(\Psi^r-1) \\
\Theta_{m+1} & = (\Psi^r-1)(\Psi^r-r)(\Psi^r-r^{-1}) \cdots (\Psi^r-r^{\tilde{s}(m)})(\Psi^r-r^{\tilde{s}(m+1)}) \\
\end{align*}
where \[ \tilde{s}(m) =  \begin{cases} 
      \frac{m}{2} & m \textup{ even } \\
      \frac{1-m}2 & m \textup{ odd } \\
          \end{cases}
\]
So then 
\begin{align*}
\Theta_m\Theta_1  - \Theta_{m+1} & =  (\Psi^r-1)(\Psi^r-r)(\Psi^r-r^{-1}) \cdots (\Psi^r-r^{\tilde{s}(m)})(r^{\tilde{s}(m+1)}-1) \\
\Theta_m\Theta_1 & = [r^{\tilde{s}(m+1)}-1]\Theta_m + \Theta_{m+1}\\
\end{align*}
Therefore 
\begin{align*}
\sum_{m \geq 0} a_m\Theta_m\Theta_1 & =  a_0 \Theta_1 + \sum_{m \geq 1} a_m [r^{s(m)}-1]\Theta_m + \Theta_{m+1} &=\sum_{m \geq 1} (a_m(r^{s(m)}-1)+a_{m-1})\Theta_{m}\\
\end{align*}
where $s(m)=\tilde{s}(m+1)$.
Thus when $k=0$, our formula becomes 

$$
\Psi^{*}\langle a_{m} \rangle = \scaleleftright[1.75ex]{<} {\begin{array}{c}  0\\ a_{1}(r^{s(1)}-1 )+a_{0}\\ 
a_{2}(r^{s(2)}-1 )+a_{1}\\ 
\vdots \\
a_{m}(r^{s(m)}-1 )+a_{m-1} \\ \vdots \end{array} }{>}
$$


When $k\neq 0$, then we have $n \neq 0$ and thus, must determine how the map $\Psi^r-1$ behaves on $E(1)_{*-n}E(1)$ instead of just $E(1)_*E(1)$. Do to this, we observe what happens on the level of the generators $v_1^i$. We first note that

\[ (\Psi^r - 1)v_1^i = (r^{i(p-1)}-1)v_1^i.\]

As mentioned above, precomposing with such a map corresponds to multiplication by $\Theta_1$. Thus, upon shifting to $E(1)_{*-n}E(1)$ via multiplication by $v_1^k$, we see that multiplication by $\Theta_1$ would correspond to $\Psi^r-1$ producing $(r^{i(p-1)}-1)v_1^{i+k}$ in $E(1)_{*-n}E(1)$. However, to truly shift to working in $E(1)_{*-n}E(1)$ we observe that 

\[(\Psi^r - 1)v_1^{i+k} = (r^{(i+k)(p-1)}-1)v_1^{i+k}.\]

Due to this difference, precomposition with $\Psi^r-1$ on $E(1)_{*-n}E(1)$ when translated to sums of $\Theta_m$'s must include an additional $\Theta_0$ term. Up to a $p$-local unit, for any $i$, $$r^{(i+k)(p-1)} - r^{i(p-1)} = p^{\nu(k)+1}.$$ Hence, when $k\neq 0$, $\Psi^* $ acts by multiplication by $\Theta_1 + p^{\nu(k)+1}\Theta_0$. By performing a similar computation to the one above for $\sum_{m\geq 0} a_m\Theta_m\Theta_1$, we obtain

\[ \sum_{m\geq 0} a_m\Theta_m(\Theta_1 + p^{\nu(k)+1}\Theta_0) = p^{\nu(k)+1}a_0 + \sum_{m\geq 1}a_m[r^{{s}(m)}-1+p^{\nu(k)+1}]\Theta_m\]
 
and thus, when $k\neq 0$ our formula becomes

$$
\Psi^{*}\langle a_{m} \rangle = \scaleleftright[1.75ex]{<} { \begin{array}{c} p^{\nu(k)+1} a_0 \\ a_{1}(r^{s(1)}-1 + p^{\nu(k)+1})+a_{0}\\ 
\vdots \\
a_{m}(r^{s(m)}-1 +p^{\nu(k)+1})+a_{m-1} \\ \vdots \end{array}}{>}
$$


\bigskip
\noindent {{\bf The map } $\Psi^*$ on the rational terms}:

Let us consider $\Psi^*: \mathbb{Q} \longrightarrow \mathbb{Q}$, i.e. the map induced by $\Psi$ on terms of Type (II). We recall that we have an isomorphism
\[
\Hom_{\B}(\E_*\E, \E_* \otimes \mathbb{Q}) \cong \mathbb{Q}
\]
and that the $1 \in \mathbb{Q}$ on the right hand side corresponds to the map $q$ itself. But $q \circ \Psi =0$ as they are part of the resolution (\ref{inj-res}), thus the map $\Psi^*$ above is the zero map. 

\bigskip
For $t=s(2p-2)$, the copy of the rationals 
\[
\Hom_{\B}(\E_*\E, \E_{*+t} \otimes \mathbb{Q}) \cong \mathbb{Q}
\]
is generated by the function that sends $w \in \E_* \E$ to $0$ and $u \in \E_* \E$ to $v_1^{s}$. As
\[
(\Psi^r-1)v_1^{s}=(r^{(s)(p-1)}-1)v_1^{s},
\]
precomposition with $\Psi$ is multiplication by $r^{(s-1)(p-1)}-1$, which up to $p$-local unit is a nontrivial power of $p$. (This is also consistent with the case $t=0$, where this map is trivial.)

\bigskip
\noindent {{\bf The map } $\bm q_*$}: The map 
\[q_*: \Hom_\B(\E_{*-n}\E, \E_*\E) \longrightarrow \Hom_\B(\E_{*-n}\E, \E_*\otimes \mathbb{Q})\]
is the map obtained by composing with the map 
\[q:\E_*\E \longrightarrow \E_*\otimes\Q\] from the Adams-Baird-Ravenel resolution. The map $q$ is induced by the map $\E \rightarrow H\Q$ which, on homotopy, is a rational isomorphism in degree 0 and trivial in all other degrees.   So $q_*$ is induced by the inclusion $\mathbb{Z}_{(p)} \hookrightarrow \mathbb{Q}$ and becomes $q_*(\langle a_m \rangle ) =  a_0 $.  For $n \neq 0$, $q_*$ is trivial.

\bigskip
\noindent {{\bf The map } $\bm{q^*}$}:  Lastly we consider the map $$q^{*}: \Hom_\B(\E_*\otimes\Q, \E_*\otimes\Q) \longrightarrow \Hom_\B(\E_{*}\E, \E_*\otimes\Q).$$ We saw that both these terms are isomorphic to one copy of $\mathbb{Q}$ via the isomorphism $f \mapsto f(1)$. So, $q^*$ sends $1 \in \mathbb{Q}$ to the element in $\mathbb{Q}$ corresponding to the composite
\[
\E_*\E \xrightarrow{q} \E_*\otimes \mathbb{Q} \xrightarrow{1} \E_*\otimes \mathbb{Q}
\]
which is again $q$. Thus, $q^*: \mathbb{Q} \longrightarrow \mathbb{Q}$ is simply the identity map.

\section{The calcuation for $n = 0$} \label{sec:n0}

In this section and the next, we are going to use our explicit representations to calculate the homology of the endomorphism dga $C$.  Note that by our earlier remarks, we know that this should come out to   $H^*(C)=\pi_{-*}(L_1S^0)$  (the change in sign arises as the dga is cohomologically graded).

As explained at the end of Section \ref{sec:enddga}, in degrees around 0 our dga looks like
$$
\xymatrix{
 C^{-1} \ar[r]^{d^{-1}} & C^{0} \ar[r]^{d^{0}} & C^{1} \ar[r]^{d^{1}} & C^{2} \\
  & \Z^{\N}_{(p)} \ar[rd]^{\Psi_*} &  & \\
 \Z^{\N}_{(p)} \ar[ru]^{\Psi^*} \ar[rd]_{\Psi_*} &   & \Z^{\N}_{(p)} \ar[rd]^{q_*} & \\
  & \Z^{\N}_{(p)} \ar[ru]^{\Psi^*} \ar[rd]_{q_*} &   &{ \Q} \\
  &   & {\Q} \ar[ru]^{\Psi^*} & \\
  &\Q \ar[ru]_{q^{*}} && 
}
$$
Condensing it down we have 
$$ 
\xymatrix @R=.75pc{0 \ar[r] &\ds  \Z_{(p)}^{\N} \ar[r]^-{d^{-1}} & \ds \Z_{(p)}^{\N} \oplus \Z_{(p)}^{\N}\oplus \Q \ar[r]^-{d^{0}} & \ds\Z_{(p)}^{\N} \oplus \Q  
\ar[r]^-{d^{1}} & \ds\Q \ar[r]&0 \\
 & \framebox{-1} & \framebox{0} & \framebox{1} & \framebox{2} & }
$$

Now we need to see what each of these maps is on sequences.
Using our formulas   from Section \ref{S:form} we get 

\[ 
d^{-1}(\langle a_{m}\rangle) = (\Psi^{*}\langle a_{m}\rangle,\Psi_{*}\langle a_{m}\rangle ,0)  \\
 = \left(\scaleleftright[1.75ex]{<} { \begin{array}{c} 0 \\ a_{1}(r^{s(1)}-1)+a_{0}\\ 
\vdots \\
a_{m}(r^{s(m)}-1)+a_{m-1} \\ \vdots \end{array}}{>},  \scaleleftright[1.75ex]{<} { \begin{array}{c} 0 \\ 0\\ 
\vdots \\
0\\ \vdots
\end{array}}{>},  0 \right)
\] 

\begin{align*}
d^{0}(\langle a_{m} \rangle,\langle b_{m}\rangle ,x)
&  =(\Psi_{*}\langle a_{m}\rangle -\Psi^{*}\langle b_{m}\rangle ,q_{*}\langle b_{m}\rangle - q^{*}(x)) \\
 &  =\left( \scaleleftright[1.75ex]{<} { \begin{array}{cc}0  \\
-b_{1}(r^{s(1)}-1)-b_{0}\\
\vdots  \\
-b_{m}(r^{s(m)}-1)-b_{m-1} \\
\vdots \\
\end{array} }{>}, b_{0}-  x\right)
\end{align*}

\[ 
d^{1}(\langle a_{m} \rangle, y )
=q_{*}\langle a_{m}\rangle +\Psi^{*}(y) =a_{0}
\]

\begin{lemma}
The sequences of maps $d^{-1}$, $d^{0}$ and $d^{-1}$ give a cochain complex.
\end{lemma}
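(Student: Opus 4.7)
The plan is to verify both $d^{0}\circ d^{-1}=0$ and $d^{1}\circ d^{0}=0$ by direct substitution into the explicit formulas recorded just above, using three observations established in Section~\ref{sec:sequences}: \emph{(i)} for internal degree $n=0$ (i.e.\ $k=0$), the map $\Psi_{*}=(\Psi^{r}-1)_{*}$ is identically zero, since $\Psi^{r}$ acts as the identity on $\E_{0}$; \emph{(ii)} the map $\Psi^{*}$ on the rational summands $\Q\to\Q$ is the zero map, because $q\circ(\Psi^{r}-1)=0$ in the Adams--Baird--Ravenel resolution (\ref{eqn:resolution}); and \emph{(iii)} the zeroth entry of $\Psi^{*}\langle a_{m}\rangle$ is always $0$, as is visible in the explicit sequence formula for $\Psi^{*}$ when $k=0$.

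For $d^{0}\circ d^{-1}$, I would substitute $d^{-1}(\langle a_{m}\rangle)=(\Psi^{*}\langle a_{m}\rangle,0,0)$ into $d^{0}$. The first coordinate becomes $\Psi_{*}(\Psi^{*}\langle a_{m}\rangle)-\Psi^{*}(0)$, which vanishes by \emph{(i)}, and the second coordinate is $q_{*}(0)-q^{*}(0)=0$. For $d^{1}\circ d^{0}$, applying $d^{0}$ to $(\langle a_{m}\rangle,\langle b_{m}\rangle,x)$ produces $(-\Psi^{*}\langle b_{m}\rangle,\,b_{0}-x)$, after using \emph{(i)} to drop the $\Psi_{*}\langle a_{m}\rangle$ term and the fact that $q^{*}$ is the identity on $\Q$. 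Feeding this into $d^{1}$ gives $q_{*}(-\Psi^{*}\langle b_{m}\rangle)+\Psi^{*}(b_{0}-x)$: the first summand is minus the zeroth entry of $\Psi^{*}\langle b_{m}\rangle$ and vanishes by \emph{(iii)}, while the second vanishes by \emph{(ii)}.

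I expect no real obstacle; this is essentially a bookkeeping check. Conceptually the lemma is automatic, because the standard Hom-complex differential $f\mapsto d_{A}\circ f+(-1)^{n+1}f\circ d_{B}$ squares to zero on any mapping complex between cochain complexes by the one-line identity that uses $d_{A}^{2}=0=d_{B}^{2}$. What the lemma is really verifying is that our translation of these differentials into maps of $\Z_{(p)}$-sequences via the Clarke--Crossley--Whitehouse basis $\{\Theta_{m}(\Psi^{r})\}$ has faithfully preserved that cochain structure; the three observations \emph{(i)}--\emph{(iii)} are precisely what make the sequence-level formulas compose to zero in the $n=0$ range.
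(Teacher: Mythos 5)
Your verification is correct and follows essentially the same route as the paper's proof: direct substitution of the explicit sequence formulas, using that $\Psi_*$ vanishes for $k=0$, that $\Psi^*$ on the rational term is zero, and that the zeroth entry of $\Psi^*\langle a_m\rangle$ is zero. The paper's argument is just a terser version of this same bookkeeping, so there is nothing to add beyond your (accurate) remark that the lemma is conceptually automatic from $d^2=0$ on the Hom-complex.
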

\begin{proof}
It is easy to see that $(d^{0}\circ d^{-1})(a_{m})=d^{0}(\Psi^{*}(a_{m}),0,0)=0$ and

\[ \begin{array}{ll}
(d^{1}\circ d^{0})(\langle a_{m}\rangle, \langle b_{m} \rangle ,x) & =d^1 \left( \scaleleftright[1.75ex]{<} { \begin{array}{c}0  \\
-b_{1}(r^{s(1)}-1)-b_{0}\\
-b_{2}(r^{s(2)}-1)-b_{1}\\
\vdots \\
-b_{m}(r^{s(m)}-1)-b_{m-1} \\
\vdots \\
\end{array} }{>} ,  
b_{0}-x \right)  \\ & = 0
\end{array}
\]

\end{proof}
\begin{theorem}\label{thm:n=0 homology}
Near $n=0$:  

\[H^{n}(C) = \begin{cases}0 & \mbox{ if } n = -1,  \\ \Z_{(p)} & \mbox{ if } n = 0  \\
0 & \mbox{ if } n = 1,  \\
\Q\slash\Z_{(p)} & \mbox{ if } n = 2  \\
							 \end{cases}\]
\end{theorem}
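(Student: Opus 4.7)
The plan is to compute each of $H^{-1}, H^{0}, H^{1}, H^{2}$ separately using the explicit sequence formulas for $d^{-1}, d^{0}, d^{1}$ established in Section~\ref{sec:sequences}.

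For $H^{-1}(C) = \ker d^{-1}$, I would unwind $\Psi^{\ast}\langle a_{m}\rangle = 0$ to the recurrence $a_{m-1} = -a_{m}(r^{s(m)}-1)$, yielding $a_{0} = (-1)^{m} a_{m} \prod_{j=1}^{m}(r^{s(j)}-1)$. Since $r$ generates $(\Z/p^{2})^{\times}$, the factor $r^{s(j)}-1$ has positive $p$-adic valuation whenever $(p-1) \mid s(j)$; this happens for infinitely many $j$ (since $s(j)$ ranges through $\pm 1, \pm 2, \ldots$), so the $p$-adic valuation of the telescoping product grows without bound, forcing $a_{0}=0$ and then inductively $\langle a_{m}\rangle = 0$. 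For $H^{2}(C) = \Q/\Ima d^{1}$, the formula $d^{1}(\langle a_{m}\rangle, y) = a_{0}$ (using that $\Psi^{\ast}$ acts trivially on the rational terms at $n = 0$) shows $\Ima d^{1} = \Z_{(p)} \subset \Q$, giving $H^{2}(C) = \Q/\Z_{(p)}$.

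The computations of $H^{0}$ and $H^{1}$ are to be carried out in tandem. Injectivity of $\Psi^{\ast}$ just established shows $\ker d^{0} = \{(\langle a_{m}\rangle, 0, 0)\} \cong \Z_{(p)}^{\N}$, while $\ker d^{1} = \{(\langle c_{m}\rangle, y) : c_{0} = 0\}$. Meanwhile $\Ima d^{0} = \{(-\Psi^{\ast}\langle b_{m}\rangle,\, b_{0} - x)\}$ has unrestricted $\Q$-component (by varying $x$) and first component in $\Ima(\Psi^{\ast})$. Both $H^{0}(C) \cong \Z_{(p)}$ and $H^{1}(C) = 0$ then reduce to the single identification
\[ \Ima(\Psi^{\ast}) = \bigl\{ \langle c_{m} \rangle \in \Z_{(p)}^{\N} : c_{0} = 0 \bigr\}, \]
after which the projection $\pi\colon \langle a_{m}\rangle \mapsto a_{0}$ realizes the isomorphism for $H^{0}$ and forces $H^{1}$ to vanish. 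The inclusion $\subseteq$ is immediate from the formula for $\Psi^{\ast}$.

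The main obstacle is the non-trivial inclusion $\supseteq$ in the displayed identity. Given $\langle c_{m}\rangle$ with $c_{0} = 0$, one must produce $\langle a_{m}\rangle \in \Z_{(p)}^{\N}$ solving $a_{m}(r^{s(m)}-1) + a_{m-1} = c_{m}$ for all $m \geq 1$. The natural $p$-adic formula
\[ a_{0} = \sum_{j \geq 1} (-1)^{j-1} c_{j} \prod_{k=1}^{j-1}(r^{s(k)}-1) \]
converges because the products have unbounded $p$-adic valuation, and the remaining $a_{m}$ are then forced by the recurrence. The delicate point is verifying that the resulting sequence lies in the $\Z_{(p)}$-lattice rather than merely in its $p$-adic completion $\Z_{p}^{\N}$; I would expect this to be handled by exploiting the $\Theta_{m}$-basis structure of $\E^{0}\E$ from \cite{Clarke-Crossley-Whitehouse} and tracking valuations carefully, or alternatively by splitting off the finitely many indices $m$ where $(p-1) \mid s(m)$ (where $r^{s(m)}-1$ fails to be a unit in $\Z_{(p)}$) and solving the remaining recurrence by units.
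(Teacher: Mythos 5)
Your skeleton is the paper's: you identify $\ker d^{-1}$, $\ker d^{0}$, $\ker d^{1}$ and $\Ima d^{1}$ exactly as the paper does, you handle $n=2$ the same way, and you reduce both $H^{0}\cong\Z_{(p)}$ and $H^{1}=0$ to the single identity $\Ima(\Psi^{*})=\{\langle c_m\rangle : c_0=0\}$, which is precisely the claim \eqref{argument} in the paper's proof. Your telescoping-valuation argument for $H^{-1}=0$ is a sound variant of the paper's (the paper instead picks $\ell>m$ with $p(p-1)\mid s(\ell)$, uses $r^{s(\ell)}-1=0$ to force $a_{\ell-1}=0$, and inducts downward); only note that your upward induction from $a_{0}=0$ tacitly assumes $r^{s(m)}-1\neq 0$ at every step, whereas applying your valuation estimate at each index $m$ separately avoids any such assumption.

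The genuine gap is the step you explicitly defer, and it is the heart of the theorem: the inclusion $\supseteq$ in $\Ima(\Psi^{*})=\{\langle c_m\rangle: c_0=0\}$, on which both the $n=0$ and $n=1$ cases rest. Your $p$-adic series a priori produces a preimage only in $\Z_p^{\N}$, and your fallback patch rests on a false premise: $s(m)$ runs through every nonzero integer as $m$ grows, so $(p-1)\mid s(m)$ for \emph{infinitely} many $m$, i.e.\ there are infinitely many indices at which $r^{s(m)}-1$ fails to be a unit of $\Z_{(p)}$; one cannot split off finitely many bad indices and solve the rest by units. The paper's construction avoids the integrality problem by never dividing: given a target sequence with vanishing zeroth entry, for each index $m$ it takes the smallest $\ell>m$ with $p(p-1)\mid s(\ell)$, where $r^{s(\ell)}-1=0$, so the $\ell$-th relation forces the $(\ell-1)$-st entry of the preimage to equal the $\ell$-th entry of the target, and the lower entries are then obtained by the downward recursion $c_{j-1}=a_j-c_j(r^{s(j)}-1)$, which uses only multiplication and subtraction in $\Z_{(p)}$; the vanishing coefficients at these anchor indices decouple the infinite system into finite blocks, so no limit, no completion, and no divisibility tracking ever enter. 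Equivalently, under that vanishing your series for each preimage entry is in fact a finite sum, which is exactly what rescues $\Z_{(p)}$-integrality. Until such an argument is supplied, your proof of the central surjectivity claim, and hence of $H^{0}\cong\Z_{(p)}$ and $H^{1}=0$, is incomplete.
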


\begin{proof}
\noindent {$\bm{n=-1}$}:  Suppose that 
$\langle a_{m}\rangle\in \ker (d^{-1})$.  Then we know that $$a_{m}(r^{s(m)}-1)+a_{m-1} = 0 \,\,\,\mbox{for all} \,\,\, m \in \mathbb{N}.$$  We will show that $a_m = 0 $ for all $m \in \mathbb{N}$.   For any given $m$, choose $\ell>m$ such that $p(p-1) | s(\ell)$.  Then   $r^{s(\ell)}=1$, and so we know that $a_{\ell-1} = 0$.  Then since for all $j  \in \mathbb{N}$
$$a_{j}(r^{s(j)}-1)=-a_{j-1},$$ we see that if $a_{j} =0 $ then $a_{j-1}=0$ also.  So by induction, $a_m = 0$ also.  
So $d^{-1}$ is injective and $H^{-1}(C)=0$. 

\bigskip
\noindent {$\bm{n=0}$}:  Suppose that $(\langle a_m \rangle, \langle b_m \rangle, x) \in \ker d^0$.  Then we know that \[ \left( \scaleleftright[1.75ex]{<} { \begin{array}{cc}0  \\
-b_{1}(r^{s(1)}-1)-b_{0}\\
\vdots  \\
-b_{m}(r^{s(m)}-1)-b_{m-1} \\
\vdots & \vdots \\
\end{array} }{>},  b_0 -x \right) = 0\] 
Therefore $b_{0}=x$ and $b_{m}=0,\forall m \in \N$ by the same argument used for $n=-1$. Then $(\langle a_m \rangle, \langle b_{m} \rangle, x)=(\langle a_m \rangle, 0)$  

We claim that \begin{equation}\label{argument}
\begin{array}{lll}
\Ima d^{-1}& =&\{ d^{-1}\langle c_{m} \rangle: \langle c_{m} \rangle \in  \Z_{(p)}^{\mathbb{N}} \}\\
& =&\{(\Psi^{*}\langle c_{m} \rangle, \langle 0 \rangle,0 ): \langle c_{m} \rangle \in  \Z_{(p)}^{\mathbb{N}} \}\\
& =&\{(\langle a_m \rangle, \langle 0 \rangle, 0): \langle a_m \rangle \in  \Z_{(p)}^{\mathbb{N}}, \ a_{0}=0 \}.
\end{array}
\end{equation}
It is clear that any element in the image must have $a_0 = 0$.  Conversely, given  $\langle a_m \rangle$ with $a_0 = 0$, we can produce $\langle c_m \rangle$ such that $d^{-1} (\langle c_m \rangle )  = (\langle a_m \rangle, \langle 0 \rangle, 0)$.   We produce $c_m$ as follows:   for any $m$, we choose the smallest $\ell > m $ such that $(p-1)p | \ell$.  Then we need  to choose $c_{\ell-1 }$ satisfying $c_{\ell-1 } = a_{\ell}$.    Then we work our way down, observing that if we have chosen $c_j$, we can then find $c_{j-1}$ to satisfy $$c_j(r^{s(j)}-1) + c_{j-1} = a_{j-1}.$$  Inductively we can get a value for $c_m$.

So we can find $\langle c_m \rangle$ such that $c_m = a_{m}(r^{s(m)}-1)+a_{m-1}$ and hence $d^{-1}(\langle c_m \rangle) = (\langle a_m \rangle, \langle 0 \rangle, 0)$.
Thus we see that $\ker d^0 / \Ima d^{-1} = \Z_{(p)}$ as represented by the value of $a_0$ in $(\langle a_m \rangle, \langle 0 \rangle, 0)$.

\bigskip
\noindent {$\bm{n=1}$}: Note that $\ker d^1 = (\langle a_m\rangle, y) $ such that  $a_0  = 0$. 
We can see in our claim above, given in equations \eqref{argument},  that there exists $\langle b_m\rangle =\langle -c_m\rangle \in \Z_{(p)}^{\mathbb{N}}$ such that $$\langle a_m\rangle=\Psi^{*}(\langle c_m\rangle)=-\Psi^{*}(\langle b_m\rangle).$$ 
Then  $$d^{0}( \langle 0 \rangle, \langle b_{m} \rangle,b_{0}-y) = (\langle a_m \rangle, y).$$ As  $\ker d^1 = \Ima d^{0}$ we get $H^{1}(C)=0.$  

\bigskip
\noindent {$\bm{n=2}$}:  Finally, we know that $\ker d^2 =  \Q$.  Clearly $\Ima d^1=\Z_{(p)}$ . So  $$H^2 (C) = \Q\slash\Z_{(p)}.$$  
\end{proof}

\section{The Homology Calculation for $n\neq 0$} \label{sec:nnot0}

Looking back on  our description of the endomorphism dga at the end of Section \ref{sec:enddga}, we see that in terms of our sequence representations, we have 
$$
\xymatrix{
 C^{(2p-2)k-1} \ar[r] & C^{(2p-2)k} \ar[r] &C^{(2p-2)k+1} \ar[r] & C^{(2p-2)k+2} \\
&   \Z^{\N}_{(p)} \ar[rd]^{\Psi_*} & & \\
 \Z^{\N}_{(p)} \ar[ru]^{\Psi^*} \ar[rd]_{\Psi_*} &  & \Z^{\N}_{(p)} \ar[rd]^{q_* = 0} & \\
 &  \Z^{\N}_{(p)} \ar[ru]^{\Psi^*} \ar[rd]_{q_* = 0} &  & \mathbb{Q}\\
  &   & \mathbb{Q} \ar[ru]^{\Psi^*}_{\cong} & \\
 &  0\ar[ru]_{q^{*}} && 
}
$$

Condensing down our earlier diagram, we are looking at 
\begin{equation}\label{E:kn0} \xymatrix @R=.75pc{0 \ar[r] &\ds  \Z_{(p)}^{\N} \ar[r]^-{d^{(2p-2)k-1}} & \ds\Z_{(p)}^{\N} \oplus \Z_{(p)}^{\N} \ar[r]^-{d^{(2p-2)k}} & \ds\Z_{(p)}^{\N} \oplus\mathbb{Q}  \ar[r]^-{d^{(2p-2)k+1}} & \ds  \mathbb{Q} \ar[r] & \ds 0 \\
 & \framebox{(2p-2)k-1} & \framebox{(2p-2)k} & \framebox{(2p-2)k+1} & \framebox{(2p-2)k+2} & }\end{equation}

where 
\[ \begin{array}{ll}
d^{(2p-2)k-1}(\langle a_m \rangle)  & = (\Psi^*\langle a_m \rangle,\Psi_*\langle a_m \rangle)  \\
 & = \left(\scaleleftright[1.75ex]{<} { \begin{array}{c} p^{\nu(k)+1}a_0\\ a_{1}(r^{s(1)}-1+p^{\nu(k)+1})+a_{0}\\ 
\vdots \\
a_{m}(r^{s(m)}-1+p^{\nu(k)+1}) + a_{m-1} \\ \vdots \end{array}}{>},  \scaleleftright[1.75ex]{<} { \begin{array}{c} p^{\nu(k)+1}a_0\\ p^{\nu(k)+1}a_1\\\vdots \\ p^{\nu(k)+1}a_m\\ 
\vdots \\
\end{array}}{>} \right) \end{array}
\]

\[ \begin{array}{ccc} d^{(2p-2)k}(\langle a_m \rangle,\langle b_m \rangle) & =& (\Psi_*\langle a_m \rangle - \Psi^*\langle b_m \rangle, 0)
\\
\\
&  =& \left(\scaleleftright[1.75ex]{<} { \begin{array}{cc}p^{\nu(k)+1}a_0 - p^{\nu(k)+1}b_0 \\
p^{\nu(k)+1}a_1-b_{1}(r^{s(1)}-1+p^{\nu(k)+1})-b_{0}\\
\vdots  \\
p^{\nu(k)+1}a_m-b_{m}(r^{s(m)}-1+p^{\nu(k)+1})-b_{m-1} \\
\vdots \\
\end{array} }{>}, 0 \right)
\end{array}\]
and 
\[ 
d^{(2p-2)k+1}( \langle a_m \rangle, b)= p^{\nu(k)+1}b).
\]

We start by verifying the following:  
\begin{lemma}\label{lem:kneq0 complex}
The sequence of modules and maps described in (\ref{E:kn0})  is a cochain complex.
\end{lemma}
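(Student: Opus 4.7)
The plan is to directly verify that each pair of consecutive differentials in the sequence composes to zero, since everything is now given by explicit formulas. There are only two nontrivial compositions to check: $d^{(2p-2)k}\circ d^{(2p-2)k-1}$ and $d^{(2p-2)k+1}\circ d^{(2p-2)k}$.

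For the second composition, note that the second component of the output of $d^{(2p-2)k}$ is identically $0$, while $d^{(2p-2)k+1}$ only reads off (a scalar multiple of) that second component. Hence $d^{(2p-2)k+1}\circ d^{(2p-2)k}=0$ on the nose with no computation required. This is the easy step.

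For the first composition, I would observe at the conceptual level that $\Psi_*$ and $\Psi^*$ are respectively post-composition and pre-composition with the map $\Psi^r-1$, and these two operations always commute (for any $f$, $(\Psi^r-1)\circ f\circ (\Psi^r-1)$ is well-defined without a choice of bracketing). Therefore
\[
d^{(2p-2)k}\bigl(d^{(2p-2)k-1}\langle a_m\rangle\bigr) = \bigl(\Psi_*\Psi^*\langle a_m\rangle - \Psi^*\Psi_*\langle a_m\rangle,\,0\bigr) = (0,0).
\]
If a reader prefers an explicit verification, it is equally quick: from Section~\ref{S:form} the map $\Psi_*$ on $\mathbb{Z}_{(p)}^{\mathbb{N}}$ (for $k\neq 0$) is simply multiplication by the scalar $p^{\nu(k)+1}$, and $\Psi^*$ is a $\mathbb{Z}_{(p)}$-linear operator; thus $\Psi_*\Psi^*\langle a_m\rangle = p^{\nu(k)+1}\Psi^*\langle a_m\rangle = \Psi^*(p^{\nu(k)+1}\langle a_m\rangle) = \Psi^*\Psi_*\langle a_m\rangle$.

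The only real obstacle is notational bookkeeping: one must make sure the sign convention in the differential (given by $d_A\circ f+(-1)^{n+1}f\circ d_B$ from Section~\ref{sec:enddga}) has been correctly absorbed into the formula for $d^{(2p-2)k}$ (which is why the $\Psi^*$ term appears with a minus sign). Once that sign is in place, both checks reduce to the two observations above, and the lemma follows.
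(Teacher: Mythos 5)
Your proposal is correct and follows essentially the same route as the paper: both compositions are checked directly from the explicit formulas, with $d^{(2p-2)k+1}\circ d^{(2p-2)k}=0$ because the rational component of the image of $d^{(2p-2)k}$ vanishes and $d^{(2p-2)k+1}$ only sees that component. Your observation that $\Psi_*\Psi^*=\Psi^*\Psi_*$ since $\Psi_*$ is multiplication by the scalar $p^{\nu(k)+1}$ and $\Psi^*$ is $\Z_{(p)}$-linear is exactly what the paper's entrywise computation verifies, just packaged more succinctly.
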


\begin{proof}
 We show that  $d^{(2p-2)k}( d^{(2p-2)k-1}\langle a_m \rangle) = 0$ for any sequence $\langle a_m \rangle$, where $a_m\in \Z_{(p)}$:   
$$
 \begin{array}{lllll}
\Psi_*(\Psi^*\langle a_m \rangle) &=&\Psi_*\left(\scaleleftright[1.75ex]{<} { \begin{array}{c} p^{\nu(k)+1}a_0\\ a_{1}(r^{s(1)}-1+p^{\nu(k)+1})+a_{0}\\ 
\vdots \\ a_{m}(r^{s(m)}-1+p^{\nu(k)+1})+ a_{m-1} \\ \vdots \end{array}}{>}\right)\\&=&
\scaleleftright[1.75ex]{<} { \begin{array}{c} p^{2\nu(k)+1}a_0\\ p^{\nu(k)+1}a_{1}(r^{s(1)}-1+p^{\nu(k)+1})+p^{\nu(k)+1}a_{0}\\ 
\vdots \\
p^{\nu(k)+1}a_{m}(r^{s(m)}-1+p^{\nu(k)+1}) + p^{\nu(k)+1}a_{m-1} \\ \vdots \end{array}}{>}\\
\\
&=& \Psi^{*}\left( \scaleleftright[1.75ex]{<} { \begin{array}{c} p^{\nu(k)+1}a_0\\ p^{\nu(k)+1}a_1\\ \vdots \\ p^{\nu(k)+1}a_m\\ 
\vdots \\
\end{array}}{>} \right) =\Psi^*(\Psi_*\langle a_m \rangle).
\end{array}
$$
Then $$ d^{(2p-2)k}( d^{(2p-2)k-1}\langle a_m \rangle)=\Psi_*(\Psi^*\langle a_m \rangle) -\Psi^*(\Psi_*\langle a_m \rangle)=0.$$

\bigskip
Also, 
\[
d^{(2p-2)k+1} \circ d^{(2p-2)k}(\langle a_m \rangle, \langle b_m \rangle)=d^{(2p-2)k+1}( p^{\nu(k)+1}a_0 - p^{\nu(k)+1}b_0, 0)= 0
\]
as required.


\end{proof}

It had been immediately clear that the isomorphism $\Psi^*=p^{\nu(k)+1}: \mathbb{Q} \longrightarrow \mathbb{Q}$ does not contribute anything to the chain complex, so we will omit it from here onwards.

\bigskip
Before verifying that the cohomology is as expected, we examine the kernel of $ d^{(2p-2)k}$ more closely.  
\begin{lemma}\label{lem:p divides bn}
For all $(\langle a_m \rangle ,\langle b_m\rangle )\in \ker d^{(2p-2)k}$, $p^{\nu(k)+1}|b_m$ for all $m \in \N$. 
\end{lemma}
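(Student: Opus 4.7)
The plan is to translate the kernel condition into a modular recursion for the sequence $\langle b_m\rangle$ and then iterate the recursion far enough out to force the appearance of a factor divisible by $p^{\nu(k)+1}$.

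First, I would unpack $d^{(2p-2)k}(\langle a_m\rangle,\langle b_m\rangle)=0$ componentwise. From the formula derived at the end of Section~\ref{sec:enddga}, the $m$-th coordinate for $m\ge 1$ reads
\[
p^{\nu(k)+1}a_m - b_m\bigl(r^{s(m)}-1+p^{\nu(k)+1}\bigr) - b_{m-1} = 0.
\]
Solving for $b_{m-1}$ and reducing modulo $p^{\nu(k)+1}$ kills both $p^{\nu(k)+1}a_m$ and $b_m\cdot p^{\nu(k)+1}$, leaving the clean backward recursion
\[
b_{m-1}\equiv -b_m\bigl(r^{s(m)}-1\bigr)\pmod{p^{\nu(k)+1}} \qquad (m\ge 1).
\]

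Iterating this relation from a fixed target index $M\ge 0$ gives, for every $\ell>M$,
\[
b_M \equiv (-1)^{\ell-M} b_\ell \prod_{j=M+1}^{\ell}\bigl(r^{s(j)}-1\bigr)\pmod{p^{\nu(k)+1}}.
\]
The key arithmetic input is that, since $r$ generates $(\mathbb{Z}/p^2)^\times$, it is a primitive root modulo every $p^n$; its multiplicative order modulo $p^{\nu(k)+1}$ is $p^{\nu(k)}(p-1)$, so $p^{\nu(k)+1}\mid r^s-1$ if and only if $p^{\nu(k)}(p-1)\mid s$. Since $s(j)=\tilde{s}(j+1)$ attains every integer value as $j$ ranges over $\mathbb{N}$, for any fixed $M$ I can select $\ell > M$ with $p^{\nu(k)}(p-1)\mid s(\ell)$. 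Such a choice makes the factor $r^{s(\ell)}-1$ in the telescoped product divisible by $p^{\nu(k)+1}$, whence $b_M\equiv 0\pmod{p^{\nu(k)+1}}$. The argument applies equally at $M=0$ (using the $m=1$ equation as the first link), so $p^{\nu(k)+1}$ divides every $b_m$.

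The main obstacle I anticipate is the bookkeeping around the additional summand $p^{\nu(k)+1}$ hidden inside the factor $r^{s(m)}-1+p^{\nu(k)+1}$: one must verify it disappears cleanly upon reduction modulo $p^{\nu(k)+1}$, so that no residual contribution from the $a_m$ or from $b_m$ itself survives in the chained congruence. Once this single-step reduction is made uniform in $m$, the telescoping is purely multiplicative and the required divisibility follows at once from the arithmetic of $r$.
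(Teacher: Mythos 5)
Your proof is correct and takes essentially the same route as the paper's: you reduce the kernel relation modulo $p^{\nu(k)+1}$ to the backward recursion $b_{m-1}\equiv -b_m\bigl(r^{s(m)}-1\bigr)$, pick $\ell>m$ with $r^{s(\ell)}-1\equiv 0 \pmod{p^{\nu(k)+1}}$, and push the divisibility down to $b_m$ (you telescope the product where the paper runs a downward induction, but the idea is identical). If anything you are more careful than the paper, which chooses $\ell$ with only $p(p-1)\mid s(\ell)$ and asserts $r^{s(\ell)}-1=0$; your condition $p^{\nu(k)}(p-1)\mid s(\ell)$, coming from the order of $r$ modulo $p^{\nu(k)+1}$, is what is actually needed when $\nu(k)+1>2$.
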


\begin{proof}
If $(\langle a_m \rangle,\langle b_m \rangle)$ is in the kernel, we know that 
\[p^{\nu(k)+1}a_0 = p^{\nu(k)+1} b_0 \] and \[ p^{\nu(k)+1}a_m = (r^{s(m)}-1+p^{\nu(k)+1})b_m + b_{m-1}\mbox { \, \, for all\, \, } m \geq 1.\] 
 Since $r \in (\Z/p^2)^{\times}$, we know $r^{s(m)} - 1 = 0$ whenever $s(m)$ is a multiple of $p(p-1)$. Now fix $m \in \N$ and we will show that $p^{\nu(k)+1}|b_m$. Let $\ell \in \N$, $\ell>m$ such that $r^{s(\ell)} - 1 = 0$. Then $$p^{\nu(k)+1}a_{\ell} = p^{\nu(k)+1}b_{\ell} + b_{\ell-1}$$ and thus, $p^{\nu(k)+1}|b_{\ell-1}$. Then since 
\[p^{\nu(k)+1}a_q= (r^{s(q)}-1+p^{\nu(k)+1})b_{q} + b_{q-1}\] it is clear that if $p^{\nu(k)+1}|b_q$ then also $p^{\nu(k)+1}|b_{q-1}$ for any $q \geq 1$. Thus since $p^{\nu(k)+1}| b_{\ell} $ and $\ell > m$,  $p^{\nu(k)+1}|b_m$ by induction.
\end{proof}

\begin{theorem} \label{thm:n neq 0 homology}
When $k \neq 0$, 

\[H^{n}(C) = \begin{cases} \Z/p^{\nu(k)+1} & \mbox{ if } n = (2p-2)k+1 \\
							0 & \mbox{ else} \end{cases}\]
\end{theorem}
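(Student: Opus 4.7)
The plan is to parallel the proof of Theorem \ref{thm:n=0 homology}, treating each of the four relevant degrees in the cochain complex \eqref{E:kn0} in turn.

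\emph{Easy degrees.} For $H^{(2p-2)k-1}(C)$, injectivity of $d^{(2p-2)k-1}$ is immediate: its second component $\Psi_*\langle a_m\rangle = \langle p^{\nu(k)+1} a_m\rangle$ is already an injection into a torsion-free group. For $H^{(2p-2)k+2}(C)$, the image of $d^{(2p-2)k}$ lies entirely in the $\Z_{(p)}^{\N}$ summand of $C^{(2p-2)k+1}$ (since $q_*=0$ for $n \neq 0$), while $d^{(2p-2)k+1}$ restricted to $\mathbb{Q}$ is multiplication by $p^{\nu(k)+1}$, an isomorphism of $\mathbb{Q}$; this forces $H^{(2p-2)k+2}(C) = 0$ and ensures $\ker d^{(2p-2)k+1} = \Z_{(p)}^{\N} \oplus 0$. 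For degree $(2p-2)k$, if $(\langle a\rangle,\langle b\rangle) \in \ker d^{(2p-2)k}$, Lemma \ref{lem:p divides bn} lets us write $b_m = p^{\nu(k)+1} c_m$; dividing the kernel equation $p^{\nu(k)+1} a_m = b_m(r^{s(m)}-1+p^{\nu(k)+1}) + b_{m-1}$ through by $p^{\nu(k)+1}$ gives $a_m = (\Psi^*\langle c\rangle)_m$, with $\Psi_*\langle c\rangle = \langle b\rangle$ by construction, so $(\langle a\rangle,\langle b\rangle) = d^{(2p-2)k-1}\langle c\rangle$ and $H^{(2p-2)k}(C) = 0$.

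\emph{The main case, degree $(2p-2)k+1$.} Combining these observations,
\[ H^{(2p-2)k+1}(C) \;=\; \Z_{(p)}^{\N} \big/ \bigl(p^{\nu(k)+1}\Z_{(p)}^{\N} + \Psi^*(\Z_{(p)}^{\N})\bigr). \]
Since the $0$-th component of $\Psi^*\langle b\rangle$ is $p^{\nu(k)+1} b_0$, the evaluation-at-zero map descends to a surjection $H^{(2p-2)k+1}(C) \twoheadrightarrow \Z/p^{\nu(k)+1}$. To verify injectivity, for every $\langle x_m\rangle$ with $x_0 \equiv 0 \pmod{p^{\nu(k)+1}}$ I need to produce $\langle b_m\rangle$ satisfying
\[ b_m(r^{s(m)}-1) + b_{m-1} \;\equiv\; -x_m \pmod{p^{\nu(k)+1}}, \qquad m \geq 1, \]
after which a matching $\langle a_m\rangle$ can be found realising $\langle x_m\rangle$ as $d^{(2p-2)k}(\langle a\rangle,\langle b\rangle)$. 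The intended approach is to solve this recurrence by forward iteration, using that $r^{s(m)} - 1$ is a $p$-local unit precisely when $(p-1) \nmid s(m)$, in which case $b_m$ is uniquely determined from $b_{m-1}$.

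\emph{Main obstacle.} The difficulty is the ``bad'' indices where $(p-1) \mid s(m)$: these occur in paired clusters at $m \in \{2j(p-1)-1,\,2j(p-1)\}$ for $j \geq 1$ (corresponding to $s(m) = \pm j(p-1)$), with $\nu(r^{s(m)}-1) = \nu(j)+1$. When this valuation reaches $\nu(k)+1$, the $m$-th recursion step collapses to the rigid condition $b_{m-1} \equiv -x_m \pmod{p^{\nu(k)+1}}$, leaving $b_m$ as a free parameter. The plan is to organise the recursion block-by-block between consecutive bad pairs, keeping a single free parameter per block (introduced at the step where the multiplier vanishes modulo $p^{\nu(k)+1}$) to be fixed by the compatibility condition of the \emph{following} bad pair; a careful induction on blocks should show that this bookkeeping always produces a consistent solution and thus complete the proof.
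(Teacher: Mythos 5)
Your treatment of degrees $(2p-2)k-1$, $(2p-2)k$ and $(2p-2)k+2$ matches the paper's proof (same use of Lemma \ref{lem:p divides bn}, same rewriting $b_m=p^{\nu(k)+1}c_m$ to exhibit kernel elements as $d^{(2p-2)k-1}\langle c_m\rangle$), and your reformulation of the remaining step — that $H^{(2p-2)k+1}(C)\to\Z/p^{\nu(k)+1}$, $\langle x_m\rangle\mapsto x_0$, is injective iff every $\langle x_m\rangle$ with $p^{\nu(k)+1}\mid x_0$ satisfies $x_m\equiv b_m(r^{s(m)}-1)+b_{m-1}\pmod{p^{\nu(k)+1}}$ for some $\langle b_m\rangle$ — is a correct reduction. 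But that crucial step is exactly where your argument stops being a proof: you propose forward iteration and then say that ``a careful induction on blocks should show'' the bookkeeping of free parameters works out. That consistency argument is the whole content of the hard case, and it is not supplied. Moreover, the plan as described undercounts the obstructions: you treat only the indices where $\nu(r^{s(m)}-1)\ge\nu(k)+1$ as producing rigid conditions, but whenever $0<\nu(r^{s(m)}-1)\le\nu(k)$ (i.e.\ $(p-1)\mid s(m)$ with $\nu(j)<\nu(k)$) the forward step is neither free nor rigid: solvability for $b_m$ forces $x_m+b_{m-1}\equiv 0 \pmod{p^{\nu(j)+1}}$ and determines $b_m$ only modulo $p^{\nu(k)-\nu(j)}$, so constraints and partial freedoms accumulate at every bad pair, not just one per block. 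Your ``single free parameter per block, fixed at the next bad pair'' scheme does not obviously absorb these, so as written the induction you defer to is not set up correctly.

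The paper sidesteps all of this by running the recursion \emph{downward} rather than forward: for any $m$ choose $\ell>m$ with $p^{\nu(k)+1}\mid r^{s(\ell)}-1$ (such $\ell$ exist beyond any bound), start there by taking $a_\ell=b_\ell$ and $b_{\ell-1}=-c_\ell$, and then solve each lower equation exactly for $b_{q-2}$ in terms of $b_{q-1}$ (with $a_{q-1}=b_{q-1}$), which requires no inversion of $r^{s(q-1)}-1$ at all; the degenerate indices serve only as seeds that decouple consecutive blocks, and the divisibility hypothesis $p^{\nu(k)+1}\mid c_0$ is used once, at index $0$. If you reorganise your congruence-solving in this downward direction, your ``main obstacle'' disappears and your outline becomes a complete proof; as it stands, the surjectivity of $d^{(2p-2)k}$ onto sequences with $p^{\nu(k)+1}\mid x_0$ — and hence the identification $H^{(2p-2)k+1}(C)\cong\Z/p^{\nu(k)+1}$ — is asserted but not established.
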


\begin{proof}From the complex, it is immediate that $H^t(C) = 0$ for all $t$ that are not congruent to $-1$, $0$ or $1$ modulo $2p-2$. 
 \bigskip
 
\noindent {$\bm{n=(2p-2)k-1}$}: Suppose $\langle a_m \rangle$ is in $\ker d^{(2p-2)k-1}$.  
Then $p^{\nu(k)+1}a_m = 0$ for all $m\geq 0$, and so $a_m = 0$ for all $m\geq 0$. Thus $\ker d^{(2p-2)k-1}=0$ and so $H^{(2p-2)k-1}(C) = 0$.

\bigskip
\noindent {$\bm{n=(2p-2)k}$}:  Let $(\langle a_m \rangle ,\langle b_m\rangle ) \in \ker d^{(2p-2)k}$.   This means  \[ p^{\nu(k)+1}a_0 = p^{\nu(k)+1} b_0, \mbox{ so } a_0 = b_0, \] and \[ p^{\nu(k)+1}a_m = (r^{s(m)}-1+p^{\nu(k)+1})b_m + b_{m-1}  \mbox{ for all } m \geq 1.\]  By Lemma \ref{lem:p divides bn} we know $p^{\nu(k)+1}|b_m$ for all $m$,  and so we may write $$b_m = p^{\nu(k)+1}c_m \,\,\,\mbox{for some}\,\,\, c_m \in \Z_{(p)}.$$ Thus, $a_0 = b_0 = p^{\nu(k)+1}c_0$,  and for $m \geq 1$ we may write \[ a_m = (r^{s(m)}-1+p^{\nu(k)+1})c_m + c_{m-1}. \] Thus we see that all elements of the kernel are of the form

\[ d^{(2p-2)k-1}(\langle c_m\rangle  ) = \left(\scaleleftright[1.75ex]{<} { \begin{array}{c}p^{\nu(k)+1}c_0 \\ (r^{s(m)}-1+p^{\nu(k)+1})c_1 + c_{0}\\ 
\vdots \\
(r^{s(m)}-1+p^{\nu(k)+1})c_m + c_{m-1}\\ \vdots \end{array}}{>},  \scaleleftright[1.75ex]{<} { \begin{array}{c} p^{\nu(k)+1}c_0\\  p^{\nu(k)+1}c_1\\\vdots \\ p^{\nu(k)+1}c_m\\ 
\vdots \\
\end{array}}{>} \right)\] Thus $\ker d^{(2p-2)k} = \Ima  d^{(2p-2)k-1} $ and $H^{(2p-2)k}(C) = 0$.

\bigskip

\noindent {$\bm{n=(2p-2)k + 1}$}: 
For any $(\langle a_m \rangle ,\langle b_m \rangle  \in \Z^{\N}_{(p)} \oplus \Z^{\N}_{(p)}$ we have
\[ d^{(2p-2)k}(\langle a_m\rangle ,\langle b_m \rangle ) =  \scaleleftright[1.75ex]{<} {\begin{array}{c} p^{\nu(k)+1}(a_0-b_0) \\
p^{\nu(k)+1}a_1-(r^{s(1)}-1+p^{\nu(k)+1})b_1-b_{0}\\
\vdots \\
p^{\nu(k)+1}a_m-(r^{s(m)}-1+p^{\nu(k)+1})b_m-b_{m-1}\\
\vdots \\
\end{array}} {>} .\]
So if $\langle c_m\rangle  \in \Z^{\N}_{(p)}$ is in $\Ima d^{(2p-2)k}$, then $c_0$ is clearly divisible by $p^{\nu(k)+1}$.   We will show that the converse is also true:  if  
 $p^{\nu(k)+1}|c_0$, then there exist sequences $\langle a_m
\rangle,\langle b_m \rangle \in \Z^{\N}_{(p)}$ such that $$d^{(2p-2)k}(\langle a_m \rangle, \langle b_m \rangle) = \langle c_m\rangle. $$

Given any $b_0$, we may always select $a_0$ so that $p^{\nu(k)+1}(a_0-b_0)=c_0$. We will show that we can find $a_m, b_m,$ and $b_{m-1}$ so that 
\[p^{\nu(k)+1}a_m-(r^{s(m)}-1+p^{\nu(k)+1})b_m-b_{m-1} = c_{m}\] compatibly for all $m \geq 1$. As in the proof of Lemma \ref{lem:p divides bn}, for any fixed $m \in \N$, we may choose the smallest  value $\ell > m$ such that $r^{s(\ell)} - 1 = 0$. Then if we take $a_\ell = b_\ell$ and $b_{\ell-1} = -c_\ell$ we have 
\[p^{\nu(k)+1}a_{\ell}-(r^{s(\ell)}-1+p^{\nu(k)+1})b_\ell-b_{\ell-1} = c_{\ell}. \]
Now suppose we have defined $a_q, b_q,$ and $b_{q-1}$ so that 
\[p^{\nu(k)+1}a_q-(r^{s(q)}-1+p^{\nu(k)+1})b_q-b_{q-1} = c_{q}. \]
If we then let $a_{q-1} = b_{q-1}$ and $b_{q-2} = (r^{s(q-1)}-1)b_{q-1} - c_{q-1}$ we will obtain
\[p^{\nu(k)+1}a_{q-1}-(r^{s(q-1)}-1+p^{\nu(k)+1})b_{q-1}-b_{q-2} = c_{q-1}.\]
Again, inducting downwards from $\ell$ shows that we can find values for $a_m, b_m$ for any $m$ such that 
 $d^{(2p-2)k}(\langle a_m \rangle, \langle b_m \rangle) = \langle c_m\rangle$.  
\end{proof}

\section{Products and Massey Products} \label{sec:massey}

In this section we discuss the multiplicative structure of $C$, showing that it induces an injective multiplication $H^{-(2p-2)k+1}(C)\otimes H^{(2p-2)k+1}(C) \rightarrow H^{2}(C)$ and that $C$ has the appropriate Massey products.

\subsection{Products} \label{sec:products}
  In this section we will prove the following:

\begin{proposition}\label{prop:mult}
The multiplication $C^{-(2p-2)k+1}\otimes C^{(2p-2)k+1}\rightarrow C^2$ induces multiplication $H^{-(2p-2)k+1}(C)\otimes H^{(2p-2)k+1}(C)\rightarrow H^2(C)$  given by

\[\xymatrix@R=.5pc{\Z/p^{\nu(k)+1} \otimes \Z/p^{\nu(k)+1} \ar[r] & \Q/\Z_{(p)}\\
a\otimes b \ar@{|->}[r] & \frac{a}{p^{\nu(k)+1}}\frac{b}{p^{\nu(k)+1}}}\]

\end{proposition}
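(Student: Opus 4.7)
The plan is to compute the induced product on explicit cycle representatives at the chain level and trace the answer back into $\Q/\Z_{(p)}$ using the identifications of Sections \ref{sec:enddga} and \ref{sec:sequences}.

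First, for $a \in H^{(2p-2)k+1}(C) \cong \Z/p^{\nu(k)+1}$ I would select the minimal cycle $(\langle a_m\rangle, 0) \in \Z_{(p)}^{\N} \oplus \Q$ with $a_0 = a$ and $a_m = 0$ for $m \geq 1$; the computation of $\ker d^{(2p-2)k+1}/\Ima d^{(2p-2)k}$ in Theorem \ref{thm:n neq 0 homology} confirms this represents the class $a$. Under the Clarke--Crossley--Whitehouse identification of Section \ref{sec:sequences}, this sequence corresponds to $a\cdot\Theta_0(\Psi^r) = a\cdot\mathrm{id}$, i.e.\ the scalar ``multiplication by $a$'' in the $\Hom_\B(\E_*\E, \E_*\E)$-summand of $C^{(2p-2)k+1}$. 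I proceed analogously for $b \in H^{-(2p-2)k+1}(C)$.

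Second, the dga multiplication on $C = \HOM_{\C^{2p-2}(\B)}(A^{fib}, A^{fib})$ is induced by composition of endomorphisms of $A^{fib}$. Tracing the $(i,s)$-decomposition of Section \ref{sec:enddga}, the two chosen cycles arise as components of chain maps whose relevant position is $\beta^0 : I^0 \to T^{k(p-1)}(I^1)$ and $\alpha^{(2p-2)k+1} : T^{k(p-1)}(I^1) \to I^2$. Their composition is a morphism $I^0 \to I^2$ living in $\Hom_\B(\E_*\E, \E_*\otimes\Q)$, which is exactly the Type~(II) summand contributing the $\Q$-factor of $C^2$.

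Third, I would identify this composite as an element of $\Q$ using $\Hom_\B(\E_*\E, \E_*\otimes\Q) \cong \Q$, whose generator corresponds to the Adams--Baird--Ravenel map $q$ of (\ref{eqn:resolution}). Pulling the composition through the identifications (Clarke--Crossley--Whitehouse on one side, the $q$-normalization on the other), the factor $ab$ comes from the two scalar maps, while the factor $\tfrac{1}{p^{2(\nu(k)+1)}}$ arises because the Type~(I)/Type~(II) transitions are normalized by the coboundary map $\Psi_*$, which equals multiplication by $p^{\nu(k)+1}$ on each of the two sides in question. This yields the asserted value $\tfrac{ab}{p^{2(\nu(k)+1)}} \in \Q/\Z_{(p)}$.

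The main obstacle will be the careful bookkeeping at the third step: I must verify that the $T^{\pm k(p-1)}$-twists and the non-trivial Adams actions in the intermediate stage $T^{k(p-1)}(I^1)$ combine with the rationalization map $q$ in exactly the way that extracts the factor $p^{-2(\nu(k)+1)}$ rather than some other power. Well-definedness modulo $\Z_{(p)}$ is then immediate, since any alternative cycle representative differs by a coboundary whose zeroth term is divisible by $p^{\nu(k)+1}$, so the product changes by an element of $\Z_{(p)} \subseteq \Q$.
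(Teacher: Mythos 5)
Your proposal contains a genuine gap, and it sits exactly at the step you defer to ``careful bookkeeping''. The chain-level composite you propose to evaluate is actually zero for the representatives you pick. In degree $(2p-2)k+1$, $k\neq 0$, the two coordinates of $C^{(2p-2)k+1}\cong\Z_{(p)}^{\N}\oplus\Q$ are the Type (I) component (a map $I^0\to T^{k(p-1)}I^1$, i.e.\ the sequence $\langle a_m\rangle$) and the Type (II) component (the map in the $I^1\to I^2$ slot, i.e.\ the copy of $\Q$). The map you call $\alpha^{(2p-2)k+1}\colon T^{k(p-1)}(I^1)\to I^2$ is precisely that rational coordinate of your cocycle, which you set to $0$ in your first step --- and which in fact must be $0$ on every cocycle, since $d^{(2p-2)k+1}(\langle a_m\rangle,b)=p^{\nu(k)+1}b$ by the complex (\ref{E:kn0}). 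Since the only component of a composite landing in $C^2=\Hom_\B(\E_*\E,\E_*\otimes\Q)$ is (Type (II) of one factor) composed with (Type (I) of the other), the composite you describe vanishes identically, and no normalization in your third step can extract $ab/p^{2(\nu(k)+1)}$ from it. In particular the heuristic that the denominator ``arises because the Type (I)/Type (II) transitions are normalized by the coboundary $\Psi_*$'' is not an argument: coboundary maps do not enter the chain-level product of cocycles. The paper gets the nonzero answer by a different mechanism: it multiplies the two sequence elements as operations, composing $\sum a_m\Theta_m\cdot v_1^{-k}$ with $\sum b_m\Theta_m\cdot v_1^{k}$ inside $\E^0\E$ and then applying $q$; the relevant powers of $p$ come from the evaluations $\Theta_m(\Psi^r)\,v_1^{i+k}=p^{N(i+k,m)}v_1^{i+k}$ recorded in Lemma \ref{l:mult}, and Corollary \ref{cor:mult2} identifies the $\Theta_0$-coefficient of the product as $a_0b_0$. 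That sequence-level multiplication formula, combined with the fact that the homology classes in degrees $\pm(2p-2)k+1$ and $2$ are detected by the index-zero terms (Theorems \ref{thm:n=0 homology} and \ref{thm:n neq 0 homology}), is the actual content of the proof, and it is exactly the computation missing from your outline.

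A secondary problem is your closing well-definedness remark: replacing $a_0$ by $a_0+p^{\nu(k)+1}m$ changes $\frac{a_0b_0}{p^{2(\nu(k)+1)}}$ by $\frac{mb_0}{p^{\nu(k)+1}}$, which in general does not lie in $\Z_{(p)}$, so ``the zeroth term of a coboundary is divisible by $p^{\nu(k)+1}$'' does not by itself give invariance modulo $\Z_{(p)}$. Well-definedness of the induced product is of course automatic from the dga formalism, but the formula must be interpreted, as in the paper, by fixing integer representatives $a,b$ of the classes and reading off the index-zero term of the product. Your first step (taking $a\Theta_0$ and $b\Theta_0$ as cycle representatives, justified by Theorem \ref{thm:n neq 0 homology}) is fine and agrees with the paper; the rest needs to be replaced by the computation of Lemma \ref{l:mult} and Corollary \ref{cor:mult2}.
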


This will immediately give the following:

\begin{corollary}
The multiplication $H^{-(2p-2)k+1}(C)\otimes H^{(2p-2)k+1}(C)\rightarrow H^2(C)$ is injective.
\end{corollary}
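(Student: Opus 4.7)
The plan is to apply Proposition~\ref{prop:mult} directly and reduce injectivity to a short $p$-adic valuation count. By Theorem~\ref{thm:n neq 0 homology} (together with the fact that $\nu(-k)=\nu(k)$), both $H^{-(2p-2)k+1}(C)$ and $H^{(2p-2)k+1}(C)$ are cyclic of order $p^{\nu(k)+1}$, so the tensor product source is
\[
\Z/p^{\nu(k)+1}\otimes_{\Z_{(p)}}\Z/p^{\nu(k)+1}\;\cong\;\Z/p^{\nu(k)+1},
\]
with $a\otimes b$ corresponding to the class of $ab$ modulo $p^{\nu(k)+1}$. Under this identification, the formula of Proposition~\ref{prop:mult} sends the class of $ab$ to $ab/p^{2(\nu(k)+1)}$ in $\Q/\Z_{(p)}$.

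Next I would compute the kernel. Suppose $a\otimes b$ lies in the kernel, so that $ab/p^{2(\nu(k)+1)}\in\Z_{(p)}$. This forces $v_p(ab)\geq 2(\nu(k)+1)$. Since $2(\nu(k)+1)>\nu(k)+1$, in particular $v_p(ab)\geq \nu(k)+1$, so $ab\equiv 0\pmod{p^{\nu(k)+1}}$, meaning $a\otimes b=0$ already in the source. Equivalently, one can phrase this as observing that the generator $1\otimes 1$ maps to $1/p^{2(\nu(k)+1)}$, an element of order $p^{2(\nu(k)+1)}$ in $\Q/\Z_{(p)}$, which is strictly larger than the order $p^{\nu(k)+1}$ of the cyclic source; hence no nonzero element can lie in the kernel.

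The corollary is essentially immediate from the explicit product formula, and therefore presents no genuine obstacle: all substantive work has already been carried out in Proposition~\ref{prop:mult}. The only subtlety worth flagging is the bookkeeping that both factors $H^{\pm (2p-2)k+1}(C)$ carry the same order $p^{\nu(k)+1}$, after which the injectivity is a one-line valuation comparison.
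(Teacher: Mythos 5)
Your argument is correct and follows essentially the same route as the paper, which deduces this corollary immediately from Proposition~\ref{prop:mult}; your kernel computation (if $ab/p^{2(\nu(k)+1)}$ vanishes in $\Q/\Z_{(p)}$ then in particular $p^{\nu(k)+1}\mid ab$, so $a\otimes b=0$ in the cyclic source) is exactly the valuation count that makes it immediate. One caveat: your ``equivalently'' phrasing cannot be taken literally, since no homomorphism out of a group of order $p^{\nu(k)+1}$ can send a generator to an element of order $p^{2(\nu(k)+1)}$; this signals that the displayed formula of Proposition~\ref{prop:mult} has to be read with fixed integer representatives, the well-defined pairing induced on the tensor product being, up to a $p$-local unit, $a\otimes b\mapsto ab/p^{\nu(k)+1}$, under which $1\otimes 1$ maps to an element of exact order $p^{\nu(k)+1}$ and injectivity follows just as well. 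Your primary kernel argument is unaffected by this, so the proof stands.
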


In order to prove Proposition \ref{prop:mult}, 
  we examine the multiplication on  $C^*$. The multiplication $C^{-(2p-2)k+1}\otimes C^{(2p-2)k+1}\rightarrow C^2$ is of the form
\[\xymatrix{\Hom_\B(\E_{*-n}\E, \E_*\E) \otimes \Hom_\B(\E_{*+n}\E,  \E_*\E) \ar[d] \\ \Hom_\B(\E_*\E, \E_* \otimes \Q)}\]
given by the composition of morphisms in $\B$. 


To obtain the product in $\Hom_\B(\E_*\E, \E_* \otimes \Q)$, we compose   with $q$.

We translate this into a  product on our sequence representations.  

\begin{lemma}\label{l:mult}
For sequences $\langle a_m \rangle$ and $\langle b_m \rangle$ representing  $\sum_{m\geq 0} a_m \Theta_m$ and $\sum_{n\geq 0} b_n \Theta_n$ in $\E^t\E$ and $\E^s\E$ respectively, where $t = (2p-2)k$ and $s = (2p-2)\ell$, 

\[ \sum_{m\geq 0} a_m \Theta_m \cdot \sum_{n\geq 0} b_n \Theta_n = \sum_{m+n=i} a_m b_n \Theta_m \Theta_n p^{N(i+k,m)-N(i,m)+N(i+\ell,n)-N(i,n)}\]
where $N(i, k)$ are integers that depend on $i$ and $k$.  
\end{lemma}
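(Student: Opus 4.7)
The plan is to unpack the product as composition in $\B$ and then translate through the chain of isomorphisms used to identify the Type (I) terms with power series in $\Theta_m(\Psi^r)$. First I would recall that multiplication in $C^\ast$ is literally composition of morphisms in $\B$: an element in $C^{(2p-2)k+1}$ corresponds to an $f \in \Hom_\B(E(1)_{\ast-(2p-2)k}E(1), E(1)_\ast E(1))$, and the product with an analogous element of $C^{-(2p-2)k+1}$ is the composite, which sits in the right spot of the condensed diagram to land (after applying $q$) in the $\Q$-piece of $C^2$.

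Second, I would trace each factor through the identifications $\Hom_\B(E(1)_{\ast-n}E(1), E(1)_\ast E(1)) \cong \Hom_{\Z_{(p)}}(E(1)_0 E(1), \Z_{(p)}[v_1^k]) \cong E(1)^0 E(1) \cong \{ \sum a_m \Theta_m \}$, keeping careful track of how the shift by $n=(2p-2)k$ (respectively $s=(2p-2)\ell$) affects the normalization of each $\Theta_m$. The element $\Theta_m(\Psi^r)$ acts on $v_1^i$ as a scalar whose $p$-adic valuation I call $N(i,m)$, and passing to the shifted hom-set effectively replaces $v_1^i$ by $v_1^{i+k}$, so the normalization of the basis element $\Theta_m$ as a $\Z_{(p)}$-valued functional changes by exactly $p^{N(i+k,m) - N(i,m)}$.

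Third, I would perform the composition. At the level of polynomials in $\Psi^r$ the composite of $\sum a_m\Theta_m$ and $\sum b_n \Theta_n$ is simply the polynomial product $\sum_{m,n} a_m b_n \Theta_m \Theta_n$. Grouping terms by $i = m+n$ and comparing the normalization used on the source side (shifts $k$ and $\ell$) with the normalization required on the target side (the $i$-indexed piece of $E(1)^{t+s}E(1)$) yields the compensating factor $p^{N(i+k,m) - N(i,m) + N(i+\ell,n) - N(i,n)}$, which is the asserted formula.

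The main obstacle will be the careful bookkeeping of $p$-adic normalizations across three distinct shifted hom-sets simultaneously. Each identification with $E(1)^0 E(1)$ carries its own $p$-adic scaling, and it requires delicate accounting to see that the source factors and the target factor combine symmetrically into the stated difference of $N$-values rather than cancelling or producing stray powers of $p$. I expect the verification to reduce, at its core, to computing how the valuation of $\Theta_m$ evaluated on $v_1^i$ changes when $i$ is shifted by $k$, using that $\Theta_m$ is an explicit product of factors $(\Psi^r - r^e)$ whose values on $v_1^i$ are of the form $r^{i(p-1)} - r^e$, whose $p$-adic behavior is controlled by $r \in (\Z/p^2)^\times$.
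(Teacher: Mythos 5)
Your proposal is correct and takes essentially the same route as the paper: the paper also identifies each factor with an element of $\E^0\E\cdot v_1^k$, computes the action of $\sum_m a_m\Theta_m$ on $v_1^i$ as $\sum_m a_m p^{N(i+k,m)}v_1^i$ (using that $\Theta_m$ is a product of factors $(\Psi^r-r^e)$ whose values on powers of $v_1$ are controlled by $r\in(\Z/p^2)^\times$), and obtains the stated correction exponents by comparing the shifted evaluation $N(i+k,m)$ (resp. $N(i+\ell,n)$) with the unshifted $N(i,m)$ (resp. $N(i,n)$) when multiplying the series termwise. Your "normalization bookkeeping" is exactly this computation, so there is no substantive difference.
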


\begin{proof}
Recall that when  $n = (2p-2)k$ we obtain the sequences using the  equivalence 
\[\Hom_{\B}(\E_{*-n}\E, \E_*\E) \cong \E^n\E = \E^0\E \cdot v_1^k\]
So we consider
\[ 
\xymatrix{
\E^t \E \otimes \E^{s} \E \ar@{=}[d] \ar[rr] && \E^{s+t}\E \ar@{=}[d]\\
\E^0\E \cdot v_1^k \otimes \E^0\E \cdot v_1^{\ell} \ar[rr] && \E^0\E \cdot v_1^{k+\ell} } \]
for the product of elements from $C^{t}$ and $C^{s}$ where $t = (2p-2)k$ and $s = (2p-2)\ell$.  
Since $\E^0\E = \{ \sum_{m \geq 0} a_m \Theta_m\}$ where $\Theta_m = \Theta_m(\Psi^r - 1)$,  we need to understand how $\sum_{m\geq 0} a_m \Theta_m$ acts on $v_1^i$. 

If $t = 0$ then 
\begin{align*} \sum_{m\geq 0} a_m \Theta_m \cdot v_1^i&  = \sum_{m\geq 0} a_m (r^{i(p-1)} - 1)(r^{i(p-1)} - r) \cdots (r^{i(p-1)} - r^{s(m)})v_1^i \\ & = \sum_{m \geq 0} a_m p^{N(i,m)} v_1^i \end{align*}
where $N(i,m)$ is some integer depending on $i$ and $m$.  If $t = (2p-2)k$  for $k \neq 0$,
\begin{align*} \sum_{m\geq 0} a_m \Theta_m \cdot v_1^i &= (\sum_{m\geq 0} a_m \Theta_m \cdot v_1^{i+k})v_1^{-k} \\&= \sum_{m\geq 0} a_m (r^{i(p-1)} - 1)(r^{i(p-1)} - r) \cdots (r^{i(p-1)} - r^{s(m)})v_1^i \\
&= \sum_{m \geq 0} a_m p^{N(i+k,m)} v_1^i\end{align*}
Applying this to the sum yields the product described in the lemma.  

\end{proof}

\begin{corollary} \label{cor:mult2}
 The degree term in the sequence $\langle a_m \rangle \cdot \langle b_n \rangle$ is $a_0 b_0$.
\end{corollary}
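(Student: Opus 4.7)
The plan is to identify precisely what ``degree term'' means, then show that the only contribution comes from the $(m,n)=(0,0)$ summand in the product formula of Lemma~\ref{l:mult}. By ``degree term'' in the sequence $\langle a_m\rangle\cdot\langle b_n\rangle$ we mean the coefficient of $\Theta_0$, i.e.\ the $0$-th entry of the sequence representing the product in $\E^{0}\E$ under the isomorphism of \cite[Theorem~6.2]{Clarke-Crossley-Whitehouse}.

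First I would recall that $\Theta_0=1$ and that for every $m\ge 1$ the polynomial $\Theta_m(\Psi^r)$ has $(\Psi^r-1)$ as a factor, so $\Theta_m$ vanishes when one substitutes $\Psi^r=1$. Given any element $X=\sum_{k\ge 0}c_k\Theta_k$ of $\E^0\E$, evaluating at $\Psi^r=1$ therefore isolates the coefficient $c_0$. In particular, writing $\Theta_m\Theta_n=\sum_{k\ge 0}\gamma_{m,n,k}\Theta_k$, one gets
\[
\gamma_{m,n,0}=\Theta_m\Theta_n\bigr|_{\Psi^r=1}=\Theta_m|_{\Psi^r=1}\cdot\Theta_n|_{\Psi^r=1}.
\]
This product is $1$ when $m=n=0$ and $0$ whenever $(m,n)\neq (0,0)$.

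Next I would apply Lemma~\ref{l:mult}. Every summand $a_m b_n\,\Theta_m\Theta_n\,p^{N(i+k,m)-N(i,m)+N(i+\ell,n)-N(i,n)}$ with $(m,n)\neq (0,0)$ contributes $0$ to the coefficient of $\Theta_0$ by the computation above, so the $\Theta_0$-coefficient of the product is determined entirely by $(m,n)=(0,0)$. For that summand $\Theta_m\Theta_n=\Theta_0\Theta_0=\Theta_0$, and since $\Theta_0=1$ is the empty product, the defining formula for $N$ in the proof of Lemma~\ref{l:mult} gives $N(i,0)=0$ for every $i$. Hence the exponent of $p$ is $0$, and the $(0,0)$-summand contributes exactly $a_0 b_0\,\Theta_0$.

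Putting these together, the coefficient of $\Theta_0$ in $\langle a_m\rangle\cdot\langle b_n\rangle$ is $a_0 b_0$, which is the claim. The only subtle point is confirming that $N(i,0)=0$ directly from the definition (an empty product of factors $(r^{i(p-1)}-r^{s(j)})$), so that no unexpected power of $p$ appears in the $(0,0)$-term; once this is noted, the rest is immediate from the $\Psi^r=1$ evaluation trick.
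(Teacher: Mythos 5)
Your proof is correct and takes essentially the same approach as the paper: both arguments reduce the claim to the two observations that $N(i,0)=0$ (since $\Theta_0$ is the empty product) and that only the $(m,n)=(0,0)$ summand can contribute to the $\Theta_0$-coefficient. Your evaluation-at-$\Psi^r=1$ argument simply spells out the paper's briefer assertion that $\Theta_m\Theta_n$ has no $\Theta_0$-component unless $m=n=0$.
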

\begin{proof} From the definition we see that  $N(i,0) = 0$ for any $i$, since $v_1^0 = 1$.   Since the only way for  $\Theta_m \Theta_n = \Theta_0$ is to have  $m = n = 0$, this proves the claim.    
\end{proof}

\begin{proof}[Proof of Propositon \ref{prop:mult}]
We saw in the homology computation of Theorems \ref{thm:n=0 homology} and \ref{thm:n neq 0 homology} that the homology in  $H^{(2p-2)k + 1}(C) $ and $H^2(C)$ is represented by the value of the index zero term in the sequences.  Thus, to compute a product $$H^{-(2p-2)k+1}(C)\otimes H^{(2p-2)k+1}(C)\rightarrow H^2(C)$$ we need only consider the multiplication $$C^{-(2p-2)k+1}\otimes C^{(2p-2)k+1}\rightarrow C^2$$ on the index zero terms of sequences.    By Corollary \ref{cor:mult2}, if $\langle a_n \rangle \cdot \langle b_n \rangle = \langle c_n \rangle$ then $c_0 = a_0 b_0$. Therefore if we pick any $$a \in H^{-(2p-2)k+1}(C) = \Z/p^{\nu(k)+1} \,\,\,\mbox{and}\,\,\,b \in H^{(2p-2)k+1}(C) = \Z/p^{\nu(k)+1},$$ we know multiplying them will yield the product in the quotient in $H^2(C) = \Q/\Z_{(p)}$.  Explicitly,  we first consider $a$ and $b$ as  $\frac{a}{p^{\nu(k)+1}} \in \Q/\Z_{(p)}$ and $\frac{b}{p^{\nu(k)+1}}\in \Q/\Z_{(p)}$ respectively, in $\Q/\Z_{(p)}$ and then multiply these representatives together.
\end{proof}

\subsection{Massey Products}  Here we calculate the Massey products.

\begin{proposition} Suppose that 
 $\gamma_k$ denotes an element of the cohomology $$H^{(2p-2)k+1}(C)\cong \Z/p^{\nu(k)+1}$$ such that $p \gamma_k = 0$.  Then 
the $\gamma_k$'s satisfy the following Massey product relation:
\[ \langle \gamma_i, p, \gamma_j\rangle = \gamma_{i+j}\]
and the indeterminancy of this product is zero.
\end{proposition}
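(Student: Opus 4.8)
The plan is to compute $\langle\gamma_i,p,\gamma_j\rangle$ directly on the explicit cochain model of $C$ built in Sections~\ref{sec:sequences}--\ref{sec:nnot0}, disposing of the indeterminacy first since it costs nothing. The bracket is defined precisely because $p\gamma_i=0=p\gamma_j$, which is our hypothesis, and its indeterminacy is $\gamma_i\cdot H^{(2p-2)j}(C)+H^{(2p-2)i}(C)\cdot\gamma_j$. By Theorems~\ref{thm:n=0 homology} and~\ref{thm:n neq 0 homology} both groups $H^{(2p-2)i}(C)$ and $H^{(2p-2)j}(C)$ vanish for $i,j\neq 0$, so $\langle\gamma_i,p,\gamma_j\rangle$ is a single element of $H^{(2p-2)(i+j)+1}(C)\cong\Z/p^{\nu(i+j)+1}$. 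The juggling identity $p\,\langle\gamma_i,p,\gamma_j\rangle=\langle p\gamma_i,p,\gamma_j\rangle$ together with the vanishing indeterminacy forces this element into the unique order-$p$ subgroup; hence it suffices to show it is nonzero and, for a coherent normalisation of the $\gamma_k$, equal to $\gamma_{i+j}$. The degenerate cases where one of $i,j,i+j$ is $0$ are treated separately using $\gamma_0\in H^1(C)=0$.

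\emph{Explicit representatives.} Following the proof of Theorem~\ref{thm:n neq 0 homology}, I would represent $\gamma_k$ by the cocycle $\xi_k\in C^{(2p-2)k+1}$ which is the sequence $\langle p^{\nu(k)},0,0,\dots\rangle$ in the $\Hom_\B(\E_{*-n}\E,\E_*\E)$ summand and $0$ in the rational summand; since its index-$0$ entry is not divisible by $p^{\nu(k)+1}$ it is a cocycle outside $\Ima d^{(2p-2)k}$, hence a nonzero class killed by $p$. Represent $p\in H^0(C)$ by $p$ times the unit cocycle $\mathbf 1_C$, so that $p\gamma_i$ is represented strictly by $p\,\xi_i$. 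As $[p\,\xi_i]=0$, the cochain $p\,\xi_i$ bounds; using the differential of Section~\ref{sec:nnot0} — on the relevant summand, pre- and post-composition with $\Psi^r-1$ scale the index-$0$ entry by $p^{\nu(i)+1}$ up to a unit — I would solve explicitly for a bounding cochain $u_i\in C^{(2p-2)i}$ with $d u_i=p\,\xi_i$, constructed as in the earlier homology proofs by fixing values at indices $\ell$ with $r^{s(\ell)}=1$ and inducting downward. Symmetrically produce $u_j\in C^{(2p-2)j}$ with $d u_j=p\,\xi_j$.

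\emph{The product.} Then $\langle\gamma_i,p,\gamma_j\rangle$ is represented by the cocycle $u_i\cdot\xi_j\pm\xi_i\cdot u_j$ (the sign fixed by the convention, since $|p|=0$ and $(2p-2)i+1$ is odd), and I would evaluate it with Lemma~\ref{l:mult}, in particular with Corollary~\ref{cor:mult2}, which records the index-$0$ entry of a product of sequences. By Theorems~\ref{thm:n=0 homology} and~\ref{thm:n neq 0 homology} that index-$0$ entry, reduced modulo $p^{\nu(i+j)+1}$, is exactly the resulting class in $H^{(2p-2)(i+j)+1}(C)$, and the aim is to show it equals $p^{\nu(i+j)}$ up to a unit — the order-$p$ element, i.e. $\gamma_{i+j}$.

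The hard part will be precisely this last arithmetic of $p$-powers: one must keep mutually consistent the choice of which $\Hom_\B$-summand of the non-condensed complex carries each class, the way Lemma~\ref{l:mult}'s composition product behaves under the identification $\Hom_\B(\E_{*-n}\E,\E_*\E)\cong\Z_{(p)}^{\N}$, and the way the differential interacts with both, and then check that the exponents $N(i+k,m)-N(i,m)$ appearing in Lemma~\ref{l:mult} combine with the valuation picked up from $u_i$ and $u_j$ so that the answer lands on $p^{\nu(i+j)}$ rather than on a generator or on $0$. (A possible cross-check is to multiply the bracket into $H^2(C)=\Q/\Z_{(p)}$ using the juggling relation and the nondegenerate product of Proposition~\ref{prop:mult}, which would already witness nonvanishing.) By contrast, the vanishing of the indeterminacy and the $p$-torsion of the answer are formal consequences of the homology already computed and need no new work.
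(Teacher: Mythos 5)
Your overall strategy is the paper's: kill the indeterminacy using $H^{(2p-2)i}(C)=H^{(2p-2)j}(C)=0$, represent $\gamma_k$ by the sequence $\langle p^{\nu(k)},0,0,\dots\rangle$, choose cochains bounding $p\xi_i$ and $p\xi_j$, and evaluate the standard Massey representative through Lemma \ref{l:mult} and Corollary \ref{cor:mult2}. But the proposal stops exactly where the proof begins: you leave open whether the index-zero entry of $u_i\cdot\xi_j\pm\xi_i\cdot u_j$ has $p$-valuation $\nu(i+j)$, ``rather than on a generator or on $0$,'' and that is precisely the content of the proposition, so as it stands this is a genuine gap rather than a proof. (Your juggling observation that the bracket is $p$-torsion is a nice addition the paper does not use, but it cannot substitute for the computation: it only locates the bracket in the order-$p$ subgroup, and your suggested cross-check via the pairing of Proposition \ref{prop:mult} would itself require knowing another bracket, so it does not independently give nonvanishing.)

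The missing step is also simpler than you anticipate, which is why the paper can do it in a few lines: there is no need for the downward-induction construction of bounding cochains. Since $\Psi_*$ is just multiplication by $p^{\nu(k)+1}$ on sequences, one may take $u=\langle a_0,0,0,\dots\rangle$ and $v=\langle -c_0,0,0,\dots\rangle$, so that $du=pa$ and $dv=-pc$ on the nose, with all higher entries zero. Then Corollary \ref{cor:mult2} (index-zero entry of a product is the product of index-zero entries, because $N(i,0)=0$ and only $\Theta_0\Theta_0$ contributes) immediately gives that the Massey representative has index-zero entry a unit times $a_0c_0\left(p^{\nu(i)}+p^{\nu(j)}\right)$, and since $p$ is odd this has valuation $\min(\nu(i),\nu(j))$; the paper then identifies this with $\nu(i+j)$ by the case analysis $i\neq j$ versus $i=j$, so the class is the order-$p$ element of $H^{(2p-2)(i+j)+1}(C)\cong\Z/p^{\nu(i+j)+1}$, i.e.\ $\gamma_{i+j}$. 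Your instinct that the bookkeeping of which $\Hom_\B$-summand carries each class, and of the exponents $N(i+k,m)-N(i,m)$, is the delicate point is fair, but a complete argument must actually carry it out and land on the valuation $\nu(i+j)$; until you do, the proposal establishes only that the bracket is a well-defined $p$-torsion element with zero indeterminacy, not that it equals $\gamma_{i+j}$.
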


\begin{proof}

We will compute the product directly using the definition of Massey product.
The cohomology class $\gamma_k$ must be a multiple of $p^{\nu(k)}$,  and we can represent it by the cycle 
\[a=   \scaleleftright[1.75ex]{<} { \begin{array}{c} p^{\nu(k)+1}a_0\\ 0\\ 0 \\
\vdots \\
\end{array}}{>} \]
where $a_0$ is some value such that $\nu(a_0)=0$.   Choosing the analogous representative  for $\gamma_j$ gives 
 the following cycles, $a$, $b$, and $c$, representing $\gamma_i$, $p$, and $\gamma_j$ respectively:
\[ a =   \scaleleftright[1.75ex]{<} { \begin{array}{c} p^{\nu(k)+1}a_0\\ 0\\ 0 \\ 
\vdots \\
\end{array}}{>}  
\hspace{1cm}
b=p
\hspace{1cm}
c =    \scaleleftright[1.75ex]{<} { \begin{array}{c} p^{\nu(k)+1}c_0\\  0 \\ 0\\ 
\vdots \\
\end{array}}{>}    \]
where $\nu(a_0) = \nu(c_0)=0$. 

\bigskip Now we choose
\[ u = \scaleleftright[1.75ex]{<} { \begin{array}{c} a_0\\ 0\\ 0 \\  
\vdots \\
\end{array}}{>}  
\hspace{1cm}
\mbox{and}
\hspace{1cm}
v =   \scaleleftright[1.75ex]{<} { \begin{array}{c} -c_0\\ 0\\ 0 \\ 
\vdots \\
\end{array}}{>} \]
where $|u| = (2p-2)i$ and $|v| = (2p-2)j$.
We can compute 
\[
d^{(2p-2)i}(u) = \scaleleftright[1.75ex]{<} { \begin{array}{c} p^{\nu(i)+1} a_0\\ 0\\ 0 \\ 
\vdots \\
\end{array}}{>}  = pa = (-1)^{1+|a|}a\cdot b
\]
and
\[
d^{(2p-2)j}(v) =   \scaleleftright[1.75ex]{<} { \begin{array}{c} -p^{\nu(j)+1}c_0 \\ 0\\ 0 \\ 
\vdots \\
\end{array}}{>} = -pc = (-1)^{1+|b|}b\cdot c.
\]

Therefore the Massey product $ \langle \gamma_i, p, \gamma_j\rangle $ can be computed as $[(-1)^{1+|u|}u\cdot c + (-1)^{1+|a|}a\cdot v]$.  This gives us 
\[-  \scaleleftright[1.75ex]{<} { \begin{array}{c} a_0\\ 0\\ 0 \\  
\vdots \\
\end{array}}{>} \cdot  \scaleleftright[1.75ex]{<} { \begin{array}{c} p^{\nu(k)+1}c_0\\  0 \\ 0\\ 
\vdots \\
\end{array}}{>}  +   \scaleleftright[1.75ex]{<} { \begin{array}{c} p^{\nu(k)+1}a_0\\ 0\\ 0 \\ 
\vdots \\
\end{array}}{>}  \cdot   \scaleleftright[1.75ex]{<} { \begin{array}{c} -c_0\\ 0\\ 0 \\ 
\vdots \\
\end{array}}{>} \]
which yields
\[   \scaleleftright[1.75ex]{<} { \begin{array}{c} -2a_0c_0(p^{\nu(i)}+p^{\nu(j)})\\ 0\\ 0 \\ 
\vdots \\
\end{array}}{>} 
\]
by our description of the multiplication in Section \ref{sec:products}. 

Now we can rewrite $p^{\nu(i)}+p^{\nu(j)}$ as 
\[p^{\nu(i)}+p^{\nu(j)} = p^{\mbox{min}(\nu(i),\nu(j))}(1 + p^{\mbox{max}(\nu(i),\nu(j))-\mbox{min}(\nu(i),\nu(j))}).\]
If $i \neq j$ then $\nu(i+j) = \mbox{min}(\nu(i),\nu(j))$ so $p^{\nu(i) + \nu(j)} = p^{\nu(i+j)}m$ where $\nu(m) = 0$. If $i = j$ then $\nu(i+j) = \nu(2i) = \nu(2) + \nu(i) = \nu(i)$. Thus, in this case $p^{\nu(i) + \nu(j)} = 2p^{\nu(i)} = 2p^{\nu(i+j)}$.

Thus, 
\[(-1)^{1+|u|}u\cdot c + (-1)^{1+|a|}a\cdot v=    \scaleleftright[1.75ex]{<} { \begin{array}{c} -2a_0c_0m(p^{\nu(i+j)})\\ 0\\ 0 \\ 
\vdots \\
\end{array}}{>} 
\]
where $m$ is some value such that $\nu(m)=0$. Thus we also have $\nu(2a_0c_0m)=0$ so this is an element of $H^{(2p-2)(i+j) + 1}(C)$ of order $p$ which represents $\gamma_{i+j}$.

Finally, we note that the indeterminancy of the product is
\[ \gamma_i H^{(2p-2)j}(C) \oplus \gamma_j H^{(2p-2)i}(C) \]
which is zero because the cohomology in each of those degrees is zero.
\end{proof}


\begin{thebibliography}{99}

\bibitem{Adams-Clarke} J. F. Adams and F. W. Clarke, Stable operations on complex K-theory, \emph{Illinois Journal of Mathematics}, {\bf 21} pp. 826-829 (1977).

\bibitem{AHS} J. F. Adams, A. S. Harris, R. M. Switzer, Hopf algebras of cooperations for real and complex K-theory, \emph{Proc. London Math. Soc.}, {\bf s3-23} pp. 385-408 (1971).


 \bibitem{Barnes-Roitzheim:Franke}   D. Barnes and C. Roitzheim,  Monoidality of Franke's Exotic Model, \emph{Adv. Math.}, {\bf 228} pp.~3223--3248 (2011).  

 \bibitem{Barnes-Roitzheim:Local Framings}   D. Barnes and C. Roitzheim,  Local framings, \emph{New York J. Math.}, {\bf 17} pp.~513--552 (2011).  

\bibitem{ABR} A.~K. Bousfield,  The localization of spectra with respect to homology. Topology {\bf 18}   pp.~257--281 (1979).

\bibitem{Bousfield} 
A.~K. Bousfield.
\newblock On the homotopy theory of {$K$}-local spectra at an odd prime.
\newblock {\em Amer. J. Math.}, {bf 107} pp.~895--932, (1985)


\bibitem{Clarke-Crossley-Whitehouse}  F. Clarke, M. Crossley, S. Whitehouse, Algebras of operations in K-theory \emph{Topology} {\bf 44} pp.~151--174 (2005). 
\bibitem{Dugger} D. Dugger, Spectral enrichments of model categories. \emph{Homology, Homotopy Appl.}, {\bf 8} pp.~1--30  (electronic) (2006).
\bibitem{Dug01} D. Dugger, Replacing model categories with simplicial ones.  {\em ransactions of the American Mathematical Society} {\bf 12} pp.~5003--5027 (2001).  
\bibitem{Dugger-Shipley} D. Dugger, B. Shipley, Topological equivalences for differential graded algebras, \emph{Adv. Math.}, {\bf 212} pp.~37--61 (2007).  
\bibitem{Franke96} J. Franke, Uniqueness theorems for certain triangulated categories possessing an Adams
spectral sequence. \emph{http://www.math.uiuc.edu/K-theory/0139/} (1996).

\bibitem{Hovey:comodules} M. Hovey, \emph{Homotopy theory of comodules over a {H}opf algebroid}, \emph{Contemp. Math.}, {\bf 346}, pp.~261--304 (2004).

\bibitem{Hovey book} M. Hovey, \emph{Model categories}, volume 63 of Mathematical Surveys and Monographs. American Mathematical Society, Providence, RI (1999).

\bibitem{Johnson} K. Johnson, The algebra of stable operations for complex $p$-local K-theory, \emph{Canad. Math. Bull.}, {\bf 30}, pp.~57--62 (1987).

\bibitem{Patchkoria}  I. Patchkoria, On the algebraic classification of module spectra, \emph{ Algebr. Geom. Topol.}, 
{\bf 12} pp.~2329--2388 (2012).  

\bibitem{Patchkoria2}  I. Patchkoria, On exotic equivalences and a theorem of Franke, https://arxiv.org/pdf/1612.03732.pdf. 

\bibitem{Ravenel Green Book}  D.~C. Ravenel. Complex cobordism and stable homotopy groups of spheres, volume 121 of \emph{Pure and Applied Mathematics}. Academic Press Inc., Orlando, FL, 1986.  

\bibitem{Roitzheim07} C.~Roitzheim, 
\newblock Rigidity and exotic models for the {$K$}-local stable homotopy category.
\newblock {\em Geometry and Topology}, {\bf 11} pp. 1855--1886 (2007).

\bibitem{Roitzheim08} C.~Roitzheim, On the algebraic classification of K-local spectra, \emph{Homology, Homotopy and Applications}, {\bf 10}, pp.~389--412  (2008).

\bibitem{Roitzheim15} C. Roitzheim, A case of monoidal uniqueness of algebraic models  {\em Forum Mathematicum}, {\bf 27(6)}, pp.~3615 -- 3634 (2015).

\bibitem{Schwede07}
S.~Schwede.
\newblock The stable homotopy category is rigid.
\newblock {\em Ann. of Math. (2)}, {\bf 166} pp. 837--863, (2007).

\bibitem{Schwede-Shipley} S. Schwede, B. Shipley, Stable model categories are categories of modules. \emph{Topology}, {\bf 42} pp.~103--153 (2003).

\bibitem{Strong-Whitehouse} M.~J. Strong, S. Whitehouse, Infinite sums of unstable Adams operations and cobordism. \emph{Journal of Pure and Applied Algebra}, {\bf 214} pp. 910-918 (2010).
 
\end{thebibliography}
\end{document}